\newtheorem{theorem}{Theorem}[section]
\newtheorem{remark}[theorem]{Remark}
\newtheorem{lemma}[theorem]{Lemma}
\newtheorem{definitions}[theorem]{Definitions}
\newtheorem{examples}[theorem]{Examples}
\numberwithin{equation}{section}
\newcommand{\R}{\mathbb R}
\newcommand{\cH}{{\mathcal H}}
 \def\cN{\mathcal{N}}
 \def\eps{\varepsilon}
\def\r{\mathbb{R}}
\def\rn{\mathbb{R}^N}
\def\z{\mathbb{Z}}
\def\n{\mathbb{N}}
\def\cc{\mathbb{C}}
\def\eps{\varepsilon}
\def\io{\int_{\Omega}}
\def\irn{\displaystyle\int_{\r^N}}
\def\o{\Omega}
\def\bf{\boldsymbol}
\def\cC{\mathcal{C}}
\def\cH{\mathcal{H}}
\def\cJ{\mathcal{J}}
\def\cN{\mathcal{N}}
\def\cU{\mathcal{U}}
\def\bar{\overline}
\def\what{\widehat}
\def\e{\mathrm{e}}
\def\d{\,\mathrm{d}}
\def\backslash{\smallsetminus}
\def\dsum{\displaystyle\sum}
\title{Exponential decay of the solutions to nonlinear Schrödinger systems}
\author{Felipe Angeles\footnote{Instituto de Matemáticas, Universidad Nacional Autónoma de México, Circuito Exterior, Ciudad Universitaria, 04510 Coyoacán, Ciudad de México, Mexico, 
\texttt{felidaujal@im.unam.mx}}\,,\ M\'onica Clapp\footnote{Instituto de Matemáticas, 
Universidad Nacional Autónoma de México, Campus Juriquilla, Boulevard Juriquilla 3001, 76230 Querétaro, Qro., Mexico, \texttt{monica.clapp@im.unam.mx} }\,, and Alberto Salda\~na (\Letter)\footnote{(Corresponding author \Letter) Instituto de Matemáticas, Universidad Nacional Autónoma de México, Circuito Exterior, Ciudad Universitaria, 04510 Coyoacán, Ciudad de México, Mexico, \texttt{alberto.saldana@im.unam.mx} }}
\date{}
\begin{document}

\maketitle

\begin{abstract}
We show that the components of finite energy solutions to general nonlinear Schrödinger systems have exponential decay at infinity. Our results apply to positive or sign-changing components, and to cooperative, competitive, or mixed-interaction systems. As an application, we use the exponential decay to derive an upper bound for the least possible energy of a solution with a prescribed number of positive and nonradial sign-changing components. 
\medskip

\noindent\textbf{Keywords:} Exponential decay; Schrödinger system; energy bounds; nodal solutions.\medskip

\noindent\textbf{MSC2010:} 
35B40; 
35B45; 
35J47; 
35B06; 
35J10; 

\end{abstract}

\section{Introduction}

Consider the nonlinear Schrödinger system
\begin{equation} \label{eq:introsystem}
\begin{cases}
-\Delta u_i+ V_{i}(x) u_i = \dsum_{j=1}^\ell \beta_{ij}|u_j|^p|u_i|^{p-2}u_i, \\
u_i\in H^1(\rn),\qquad i=1,\ldots,\ell,
\end{cases}
\end{equation}
where $N\geq 1$, $V_i\in L^\infty(\rn)$, $\beta_{ij}\in\mathbb{R}$ and $1<p<\frac{2^*}{2}$. Here $2^*$ is the usual critical Sobolev exponent, namely, $2^{\ast}:=\frac{2N}{N-2}$ if $N\geq 3$ and $2^*:=\infty$ for $N=1,2$. 

Systems of this type occur as models for various natural phenomena. In physics, for example, they describe the behavior of standing waves for a mixture of Bose-Einstein condensates of different hyperfine states which overlap in space \cite{EGBB97}. The coefficients $\beta_{ij}$ determine the type of interaction between the states; if $\beta_{ij}>0$, then there is an attractive force between $u_i$ and $u_j$, similarly, if $\beta_{ij}<0$, then the force is repulsive, and if $\beta_{ij}=0$, then there is no direct interaction between these components.  Whenever all the interaction coefficients are positive, we say that the system is cooperative. If $\beta_{ii}>0$ and $\beta_{ij}<0$ for all $i\neq j$, then the system is called competitive.  And if some $\beta_{ij}$ are positive and others are negative for $i\neq j$, then we say that the system has mixed couplings.  All these regimes exhibit very different qualitative behaviors and have been studied extensively in recent years, see for instance \cite{clapp2022,CPV22,PW13,B16,cp,cp2,cs, DP21,sw15,SW152,S15,ST16,TY20,TYZ22,WW20} and the references therein.

System \eqref{eq:introsystem} has a variational structure, and therefore a natural strategy is to find weak solutions by minimizing an associated energy functional on a suitable set, under additional assumptions on the matrix $(\beta_{ij})$ and on the potentials $V_i$. Using this approach, several kinds of solutions have been found in terms of their signs and their symmetries.  However, there seems to be no information available about the decay of these solutions at infinity. In this paper, we show that finite energy solutions must decay exponentially at infinity, and a rate can be found in terms of the potentials $V_i$. Our main result is the following one.

\begin{theorem}
\label{mainresult}
Assume that, for every $i=1,\ldots,\ell$,
\begin{itemize}
\item[$(V_1)$] $V_i:\rn\to\r$ is Hölder continuous and bounded,
\item[$(V_2)$] there exists $\rho\geq 0$ such that
$$\sigma_i:=\inf_{\R^N\backslash B_\rho(0)} V_i>0.$$
\end{itemize}
Let $(u_1,\ldots,u_\ell)\in \left(H^{1}(\mathbb{R}^{N})\right)^{\ell}$ be a solution of \eqref{eq:introsystem} and let $\mu_i\in(0,\sqrt{\sigma_i})$. Then, there is $C>0$ such that
\begin{align}
\label{rabierexpdecay}
|u_{i}(x)|\leq C \e^{-\mu_i|x|}\qquad \text{ for all }x\in\R^N\text{ and }i=1,\ldots,\ell.
\end{align}
Furthermore, if $V_i\equiv 1$ for every $i=1,\ldots,\ell$, then \eqref{rabierexpdecay} holds true with $\mu_i=1$.
\end{theorem}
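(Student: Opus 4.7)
\medskip
\noindent\textbf{Proof plan.} The plan is to turn the system into a scalar linear elliptic differential inequality of the form $-\Delta w + c(x) w \leq 0$ on an exterior domain, with $c(x)>0$ controlled from below in terms of the infima $\sigma_i$, and then compare $w$ to the explicit supersolution $\phi(x)=Ke^{-\mu|x|}$. As a preliminary step I would use a standard Brezis--Kato / Moser bootstrap applied to each scalar equation (treating the coupling terms as $L^\infty_{loc}$ coefficients, which is legitimate since $V_i\in L^\infty$ and $1<p<2^*/2$) to obtain $u_i\in L^\infty(\rn)\cap C^{1,\alpha}_{loc}(\rn)$; combining the local $L^\infty$ estimate with $\|u_i\|_{L^{2^*}(\rn\setminus B_R)}\to 0$ then forces $u_i(x)\to 0$ as $|x|\to\infty$ for every $i$.

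Next, Kato's inequality applied to each equation gives, distributionally,
\[
-\Delta|u_i|+V_i|u_i|\leq \sum_{j=1}^\ell |\beta_{ij}|\,|u_j|^p|u_i|^{p-1}.
\]
Setting $w:=\sum_i|u_i|$ and summing over $i$, the right-hand side is bounded by $Cw^{2p-2}\cdot w$ with $w^{2p-2}\to 0$ at infinity (since $2p-2>0$). Given $\mu<\sqrt{\sigma}$ with $\sigma:=\min_i\sigma_i$, choose $R\geq\rho$ so large that $\min_i V_i(x)-Cw(x)^{2p-2}\geq \mu^2$ on $\rn\setminus B_R$; this produces $-\Delta w+\mu^2 w\leq 0$ outside $B_R$. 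A direct computation gives $-\Delta\phi+\mu^2\phi=\frac{(N-1)\mu}{|x|}\phi\geq 0$ on $\rn\setminus\{0\}$ for $\phi(x):=Ke^{-\mu|x|}$; picking $K$ with $\phi\geq w$ on $\partial B_R$ and using that both $w$ and $\phi$ vanish at infinity, the weak maximum principle on $\rn\setminus\overline{B_R}$ yields $|u_i(x)|\leq w(x)\leq Ke^{-\mu|x|}$. To upgrade the common rate $\mu<\sqrt{\sigma}$ to the individual rate $\mu_i<\sqrt{\sigma_i}$, I would bootstrap componentwise: with $|u_j|\leq Ce^{-\mu|x|}$ the right-hand side of the $i$-th equation decays like $e^{-(2p-1)\mu|x|}$, and the comparison of $|u_i|$ with $Ke^{-\mu_i|x|}$ driven by the potential $V_i$ closes whenever $(2p-1)\mu\geq\mu_i$. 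Since $2p-1>1$, finitely many iterations reach any prescribed $\mu_i<\sqrt{\sigma_i}$.

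The sharp endpoint $\mu_i=1$ for $V_i\equiv 1$ is the delicate point, because the residual $(-\Delta+1)e^{-|x|}=\tfrac{N-1}{|x|}e^{-|x|}$ is only polynomially positive. However, by the previous step applied with $\mu$ close to $1$ and $(2p-1)\mu>1$, the right-hand side $g_i:=\sum_j\beta_{ij}|u_j|^p|u_i|^{p-2}u_i$ decays like $e^{-\mu'|x|}$ for some $\mu'>1$; for $N\geq 2$ the comparison already closes because $|x|e^{-(\mu'-1)|x|}$ is bounded, so $K\tfrac{N-1}{|x|}e^{-|x|}\geq Ce^{-\mu'|x|}$ for $K$ large. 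As a unified alternative valid also for $N=1$, one writes $u_i=\mathcal G\ast g_i$ with $\mathcal G$ the Green function of $-\Delta+1$ on $\rn$, satisfying $\mathcal G(x)\leq C|x|^{-(N-1)/2}e^{-|x|}$ for $|x|$ large; a standard splitting estimate for the convolution of $\mathcal G$ with a function decaying at rate $\mu'>1$ then gives $|u_i(x)|\leq Ce^{-|x|}$. The two main obstacles are the range $1<p<2$, where the pointwise quotient $|u_j|^p|u_i|^{p-2}$ is singular on the nodal set of $u_i$ (sidestepped by passing to $w$ and absorbing the factor $w^{2p-2}$ instead of $|u_i|^{p-2}$), and the endpoint rate $\mu_i=\sqrt{\sigma_i}$, which is not attained by the direct supersolution argument and requires either the extra polynomial slack in $N\geq 2$ or the integral representation.
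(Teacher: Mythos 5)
Your proposal is correct in its overall architecture, but the route to the crucial first step --- exponential decay at \emph{some} positive rate --- is genuinely different from the paper's. The paper gets there through an integral iteration on annuli (Lemma \ref{normdecay}, in the spirit of Ackermann--Weth): the tail energies satisfy $|\boldsymbol\xi(r+1)|_1\le\gamma(r)|\boldsymbol\xi(r)|_1$ with $\gamma(r)<1$ for large $r$, the superlinearity of the system entering as $|\boldsymbol\xi(r)|_p^p\le|\boldsymbol\xi(r)|_1^{p}$; the $H^1$-tail decay is then converted into pointwise decay via uniform local $W^{2,s}$ estimates (Lemmas \ref{sourceN}--\ref{expdecay1}). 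You instead apply Kato's inequality componentwise and sum, so that the singular factor $|u_i|^{p-2}$ never appears: the right-hand side is dominated by $Cw^{2p-2}\cdot w$ with $w=\sum_i|u_i|$, and since $w\to0$ at infinity the factor $Cw^{2p-2}$ is absorbed into the potential on an exterior domain. This is a cleaner, pointwise expression of the same structural fact (the system is superlinear as a whole), it bypasses the scaled extension operator of Lemma \ref{independent} and the annulus bookkeeping, and it delivers in one stroke every rate $\mu<\sqrt{\min_i\sigma_i}$ rather than the small rate $\vartheta/(2s)$ produced by Lemma \ref{expdecay1}. From there your componentwise bootstrap is exactly the paper's Lemma \ref{lem:pm} iteration, and your treatment of the endpoint $V_i\equiv1$, $\mu_i=1$ (the polynomial slack $K\frac{N-1}{|x|}e^{-|x|}\ge Ce^{-\mu'|x|}$ for $N\ge2$, and the Bessel-kernel convolution estimate in general) is a correct, self-contained replacement for the paper's citation of Ackermann--Dancer.

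Two points to watch. First, your $w$ is only a distributional subsolution, so the comparison on $\R^N\smallsetminus\overline{B_R}$ must be run in the weak form (testing $(w-\phi)^+$ with a cutoff at infinity); this is routine but should be stated. Second --- and this applies verbatim to the paper's own final iteration --- the decay rate of $f_i$ is capped by the slowest component, since $f_i$ contains $|u_j|^p$ for every $j$. Hence ``finitely many iterations reach any $\mu_i<\sqrt{\sigma_i}$'' is immediate only when the $\sigma_j$ are comparable; when $p<2$ and some $\sigma_i$ greatly exceeds $\min_j\sigma_j$, the self-consistent rate for component $i$ stalls near $\tfrac{p}{2-p}\sqrt{\min_j\sigma_j}$, which may be below $\sqrt{\sigma_i}$. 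Since the paper's proof shares this feature, it is not a defect of your approach relative to theirs, but it deserves an explicit hypothesis or an extra argument in either write-up.
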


We emphasize that each component may have a different decay depending on each potential $V_i$.   The main obstacle to showing \eqref{rabierexpdecay} is to handle the possibly sublinear term $|u_i|^{p-2}u_i$ for $p\in(1,2)$ (which is always the case for $N\geq 4$). To explain this point in more detail, assume that $(u_1,\ldots,u_\ell)$ is a solution of \eqref{eq:introsystem} and write the $i$-th equation of the system as
\begin{align}\label{se}
-\Delta u_i+\big(a_i(x) - c_i(x)|u_i(x)|^{p-2}\big)u_i=0, \qquad 
a_i:=V_i-\beta_{ii}|u_i|^{2p-2},\qquad 
c_i:=\dsum_{j\neq i}^\ell\beta_{ij}|u_j|^p.
\end{align}
Since every $u_j\in H^1(\R^N)\cap\cC^0(\rn)$, we know that $a_i$ and $c_i$ are bounded in $\rn$, but $|u_i|^{p-2}\to \infty$ as $|x|\to\infty$ and it is also singular at the nodal set of a sign-changing solution.  As a consequence, one cannot use directly previously known results about exponential decay for scalar equations, such as those in \cite{ackermann2016,bs,RS00}. In fact, one can easily construct a one dimensional solution of a similar scalar equation that has a power-type decay. For instance, let $w\in \cC^2(\R)$ be a positive function such that $w(x)=|x|^{-2/3}$ for $|x|>1$ and let
\begin{align*}
    c(x):=\frac{-w''(x)+w(x)}{w(x)^\frac{1}{2}},\qquad x\in\R.
\end{align*}
Then, $w\in H^1(\R)$ is a solution of $-w''+w=c\,w^\frac{1}{2}$ in $\R$, $c(x)\to 0$ as $|x|\to \infty$, and $w$ decays as a power at infinity.

This shows that the proof of the exponential estimate in Theorem \ref{mainresult} must rely on a careful study of the \emph{system structure}. In other words, although the sublinear nonlinearity $|u_i|^{p-2}u_i$ appears in \eqref{eq:introsystem}, the system is not sublinear. As a whole, it is always superlinear.

With this in mind, we adapt some of the arguments in \cite{ackermann2016,RS00} preserving at each step the system structure of the problem.  These arguments rely basically on elliptic regularity and comparison principles.

The exponential decay of solutions is a powerful tool in their qualitative study. As an application of Theorem \ref{mainresult}, we derive \emph{energy bounds} of solutions having prescribed positive and nonradial sign-changing components. For this, power type decay would not be enough. 

To be more precise, we consider the autonomous system
\begin{equation} \label{eq:system}
\begin{cases}
-\Delta u_i+ u_i = \dsum_{j=1}^\ell \beta_{ij}|u_j|^p|u_i|^{p-2}u_i, \\
u_i\in H^1(\rn),\qquad i=1,\ldots,\ell.
\end{cases}
\end{equation}
where the $\beta_{ij}$'s satisfy the following condition:
\begin{itemize}
\item[$(B_1)$] The matrix $(\beta_{ij})$ is symmetric and admits a block decomposition as follows: For some $1 \leq q \leq\ell$ there exist \ $0=\ell_0<\ell_1<\dots<\ell_{q-1}<\ell_q=\ell$ \ such that, if we set
\begin{align*}
        I_h:= \{i \in  \{1,\dots,\ell\}:  \ell_{h-1} < i \le \ell_h \},\qquad h\in\{1,\ldots,q\},
\end{align*}
then $\beta_{ii}>0, \ \beta_{ij}\geq 0 \text{ if } i,j\in I_h$, \ and \ $\beta_{ij}<0 \text{ if } i\in I_h, \ j\in I_k\text{ and }h\neq k$.
\end{itemize}

According to this decomposition, a solution $\bf u=(u_1,\ldots,u_\ell)$ to \eqref{eq:introsystem} may be written in block-form as
\[\bf u=(\bar u_1,\ldots,\bar u_q)\qquad\text{with \ }\bar u_h=(u_{\ell_{h-1}+1},\ldots,u_{\ell_h}),\quad h=1,\ldots,q.\]
We say that $\bf u$ is \emph{fully nontrivial} if every component $u_i$ is different from zero. 

Set $Q:=\{1,\ldots,q\}$. Given a partition $Q=Q^+\cup Q^-$ with $Q^+\cap Q^-=\emptyset$ we look for solutions such that every component of $\bar u_h$ is positive if $h\in Q^+$ and every component of $\bar u_h$ is nonradial and changes sign if $h\in Q^-$. To this end, we use variational methods in a space having suitable symmetries. As shown in \cite[Section 3]{clapp2022}, to guarantee that the solutions obtained are fully nontrivial we need to assume the following two conditions:

\begin{itemize}
\item[$(B_2)$] For each $h\in Q$, the graph whose set of vertices is $I_h$ and whose set of edges is $E_h:=\{\{i,j\}:i,j\in I_h, \ i\neq j, \ \beta_{ij}>0\}$ is connected.
\item[$(B_3)$] If $q\geq 2$ then, for every $h\in\{1,\ldots,q\}$ such that $\ell_{h}-\ell_{h-1}\geq 2$, the inequality
$$\Big(\min_{\{i,j\}\in E_h}\beta_{ij}\Big)\left[\frac{\min\limits_{h=1,\ldots,q} \ \max\limits_{i\in I_h}\beta_{ii}}{\dsum_{i,j\in I_h}\beta_{ij}}\right]^\frac{p}{p-1}>\,C_*\dsum_{\substack{k=1 \\ k\neq h}}^q \ \dsum_{\substack{i\in I_h \\ j\in I_k}}|\beta_{ij}|$$
holds true, where $C_*=C_*(N,p,q,Q^+)>0$ is the explicit constant given in \eqref{C} below. 
\end{itemize}

In \cite{clapp2022} it is shown that, for any $q$, the system \eqref{eq:introsystem} has a fully nontrivial solution satisfying the sign requirements described above. Furthermore, an upper bound for its energy is exhibited, but only for systems with at most $2$ blocks, i.e., for $q=1,2$. Here we use Theorem \ref{mainresult} to obtain an energy bound for \emph{any} number of blocks. 

For each $h=1,\ldots,q$, let \ $\r^{I_h}:=\{\bar s=(s_{\ell_{h-1}+1},\ldots,s_{\ell_h}):s_i\in\r\text{ for all }i\in I_h\}$ \ and define
\begin{equation} \label{eq:mu_h}
\mu_h:=\inf_{\substack{\bar s\in\r^{I_h} \smallskip \\ \bar s\neq 0}}\left(\frac{\sum_{i\in I_h}s_i^2}{\Big(\sum_{{i,j\in I_h}}\beta_{ij}|s_i|^p|s_j|^p\Big)^\frac{2}{2p}}\right)^\frac{p}{p-1}.
\end{equation}
For any $\ell\in\n$, we write $\|\bf u\|$ for the usual norm of $\bf u=(u_1,\ldots,u_\ell)$ in $(H^1(\rn))^\ell$, i.e.,
$$\|\bf u\|^2:=\sum_{i=1}^\ell\irn(|\nabla u_i|^2+|u_i|^2).$$
We prove the following result.

\begin{theorem} \label{thm:main2}
Let $N=4$ or $N\geq 6$, and let $Q=Q^+\cup Q^-$ with $Q^+\cap Q^-=\emptyset$. Assume $(B_1)$, $(B_2)$, and $(B_3)$. Then, there exists a fully nontrivial solution $\bf u=(\bar u_1,\ldots,\bar u_q)$ to the system \eqref{eq:system} with the following properties:
\begin{itemize}
\item[$(a)$] Every component of $\bar u_h$ is positive if $h\in Q^+$ and every component of $\bar u_h$ is nonradial and changes sign if $h\in Q^-$.
\item[$(b)$] If $q=1$, then
$$\|u\|^2=\mu_1\|\omega\|^2\text{ \ if \ }Q=Q^+\qquad\text{and}\qquad\|u\|^2<10\,\mu_1\|\omega\|^2\text{ \ if \ }Q=Q^-.$$
\item[$(c)$] If $q\geq 2$ the following estimate holds true
\begin{equation}\label{ee}
\|\bf u\|^2< \left(\min_{k\in Q}\Big(a_k\mu_k+\sum_{h\in Q\smallsetminus\{k\}}b_h\mu_h\Big)\right)\|\omega\|^2,
\end{equation}
\end{itemize}
where $a_k:=1$ if $k\in Q^+$, $a_k:=12$ if $k\in Q^-$, $b_h:=6$ if $h\in Q^+$, $b_h:=12$ if $h\in Q^-$, and $\omega$ is the unique positive radial solution to the equation
\begin{equation} \label{eq:omega}
-\Delta w+w=|w|^{2p-2}w,\qquad w\in H^1(\rn).
\end{equation} 
\end{theorem}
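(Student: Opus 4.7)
The plan is to apply the existence result from \cite{clapp2022}, which produces a fully nontrivial solution $\bf u$ of \eqref{eq:system} with the sign/symmetry properties in (a) as the minimum of the energy functional $\cJ$ on a symmetry-restricted Nehari-type constraint $\cN$. Since $\cJ(\bf v)=\tfrac{p-1}{2p}\|\bf v\|^2$ on $\cN$, the bound \eqref{ee} reduces to constructing, for some $k\in Q$, an admissible test function $\bf U\in\cN$ with $\|\bf U\|^2$ strictly below the right-hand side of \eqref{ee}. The key analytic input is Theorem \ref{mainresult} applied to \eqref{eq:omega}, which gives $\omega(z)\leq C\e^{-|z|}$ and hence exponential control on interactions between spatially separated bubbles. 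Property (a) is enforced by the symmetric framework of \cite{clapp2022}: for each block $h\in Q$ one chooses a subgroup $G_h\subset O(N)$ and, when $h\in Q^-$, a sign-changing homomorphism $\phi_h:G_h\to\{\pm 1\}$, so that $(G_h,\phi_h)$-equivariance forces components to be positive when $h\in Q^+$ and nonradial sign-changing when $h\in Q^-$.

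For fixed $k\in Q$, I build $\bf U=\bf U^R$ block by block. For each $h\in Q$, take a minimizer $\bar s_h=(s_{h,i})_{i\in I_h}$ of \eqref{eq:mu_h}, a center $x_h^R\in\rn$, and form the $(G_h,\phi_h)$-equivariant sum $\bf U_h^R=(s_{h,i}\Psi_h^R)_{i\in I_h}$, where $\Psi_h^R$ is the symmetrized sum of translates of $\omega$ over a chosen $G_h$-orbit through $x_h^R$, with signs dictated by $\phi_h$ when $h\in Q^-$. Choose $x_k^R=0$ so that the orbit through $0$ has length $a_k$ (namely $1$ for $k\in Q^+$, or the $12$ copies forced by $\phi_k$ for $k\in Q^-$), and choose $x_h^R$ for $h\neq k$ in generic position at distance of order $R$ from the origin and from the other $x_{h'}^R$, yielding orbits of length $b_h$ ($6$ if $h\in Q^+$, $12$ if $h\in Q^-$). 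Set $\bf U^R=\sum_h\bf U_h^R$ and apply the Nehari projection $t\mapsto t\bf U^R$; using $(B_3)$ to control the cross-block interactions, the scaling factor $t_R$ tends to $1$ as $R\to\infty$.

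The squared norm $\|\bf U_h^R\|^2$ equals $a_k\mu_k\|\omega\|^2$ or $b_h\mu_h\|\omega\|^2$ up to intra-block overlaps of order $\e^{-cR}$, which can be made negative by an appropriate choice of $\phi_h$-sign pattern, so the symmetrization strictly gains over the sum of isolated bubbles. Cross-block interaction terms are integrals of the form $\int_{\rn}|\omega(\cdot-x)|^p|\omega(\cdot-y)|^p$ with $|x-y|$ of order $R$, and are $O(\e^{-cR})$ by Theorem \ref{mainresult}. Combining these estimates yields, for $R$ large enough,
\[
\|t_R\bf U^R\|^2<\Bigl(a_k\mu_k+\sum_{h\in Q\smallsetminus\{k\}}b_h\mu_h\Bigr)\|\omega\|^2,
\]
and minimizing over $k\in Q$ gives \eqref{ee}. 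Part (b) specializes to $q=1$, where no cross-block terms appear: the positive case gives the equality $\|u\|^2=\mu_1\|\omega\|^2$ because $(s_{1,i}\omega)_i$ is itself on $\cN$ with exactly this norm, while the sign-changing case reuses the $10$-bubble construction from \cite{clapp2022}, which improves over the generic coefficient $12$ precisely because there are no other blocks to coordinate with.

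The main obstacle is the geometric coordination of the symmetry groups across all blocks: one needs to choose the $G_h$'s and the centers $x_h^R$ so that each $G_h$-orbit has the exact cardinality forced by the coefficients $a_k$ or $b_h$, the orbits of different blocks stay pairwise separated by distance $\gtrsim R$, and the Nehari projection $t_R\bf U^R$ remains fully nontrivial, which is where condition $(B_3)$ and the explicit constant $C_*$ enter. Once this combinatorial and geometric arrangement is fixed, Theorem \ref{mainresult} reduces the rest of the proof to a routine exponential-decay interaction estimate.
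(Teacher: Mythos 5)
Your overall variational framework (symmetric Nehari minimization, multi-bump test functions, exponential interaction estimates) is the right one, but there is a genuine gap at the center of the argument: you treat the existence of a minimizer as given and reduce the theorem to an upper bound for the infimum $c^{\bf\phi}$ via a test function. In fact, $c^{\bf\phi}$ is not attained in general, and the paper's proof hinges on verifying, for \emph{every} $h\in Q$, the strict compactness inequality $c^{\bf\phi}<c^{\bf\phi}_{Q\smallsetminus\{h\}}+|G_m\zeta_h|\,\mu_h\frac{p-1}{2p}\|\omega\|^2$ (Lemma \ref{lem:compactness}). Your pure multi-bump ansatz $\bf U^R$, with every block realized as a bubble orbit, gives an upper bound for $c^{\bf\phi}$ of the form $\sum_h(\text{orbit size})_h\mu_h\frac{p-1}{2p}\|\omega\|^2$, but this cannot be compared with $c^{\bf\phi}_{Q\smallsetminus\{h\}}$, which may be far below the corresponding partial sum; the inequality you would need goes the wrong way. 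This is precisely why the paper argues by induction on $q$: the test function for the $q$-block problem is built from an \emph{actual minimizer} $\bf w$ of the $(q-1)$-block subsystem (which exists by the induction hypothesis) together with a single far-away bubble orbit $\bar t_q\sigma_{qR}$ for the new block. The induction simultaneously supplies the value $c^{\bf\phi}_{Q\smallsetminus\{q\}}$, its energy bound, and — crucially — the exponential decay of $\bf w$ via Theorem \ref{mainresult}, which is what makes the cross-block interaction $\int|w_i|^p|\omega(\cdot-Rg\zeta_q)|^p\lesssim\mathrm{e}^{-Rp}$ estimable at all. The asymmetric structure of the bound in $(c)$ (one block weighted by $a_k$, the others by $b_h$, minimized over the choice of the base block $k$) is an artifact of this induction, not of placing one orbit at the origin.

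A second, related omission is the quantitative competition between the two exponentially small terms. The strict inequality requires that the intra-orbit gain $-C_0\mathrm{e}^{-Rd_m}$ from Lemma \ref{lem:test} dominate the cross-block loss $+C\mathrm{e}^{-Rp}$, i.e.\ $p>d_m=2\sin(\pi/m)$; this is what forces the choice $m=6$ (so $d_m=1<p$) and produces the specific constants $6$ and $12$. You assert that both contributions are $O(\mathrm{e}^{-cR})$ and that the net is negative, but without comparing the rates the argument does not close — with a generic placement of the centers $x_h^R$ at mutual distance of order $R$ the loss could be of the same order as, or larger than, the gain. Finally, a small correction: the intra-block gain for $h\in Q^+$ does not come from a choice of sign pattern (there $\phi_h\equiv 1$ and all bubbles are positive); it comes from the attractive self- and intra-block interactions $\beta_{ij}\geq 0$, $\beta_{ii}>0$ after the Nehari projection, as encoded in Lemma \ref{lem:test}.
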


To prove Theorem \ref{thm:main2}, we follow the approach in \cite{clapp2022} and impose on the variational setting some carefully constructed symmetries which admit finite orbits. This approach immediately gives energy estimates but it requires showing a quantitative compactness condition which needs precise knowledge about the asymptotic decay of the components of the system. Here is where we use Theorem \ref{mainresult}.

The paper is organized as follows. Section \ref{exp:sec} is devoted to the proof of the exponential decay stated in Theorem \ref{mainresult}. The application of this result to derive energy bounds is contained in Section \ref{ee:sec}, where we also give some concrete examples. 

\subsection*{Acknowledgments}

We thank Nils Ackermann for helpful comments and suggestions.  F. Angeles and A. Saldaña thank the Instituto de Matemáticas - Campus Juriquilla for the kind hospitality. F. Angeles is supported by CONACYT (Mexico) through a postdoctoral fellowship  under grant A1-S-10457. M. Clapp is supported by CONACYT (Mexico) through the research grant A1-S-10457. A. Saldaña is supported by UNAM-DGAPA-PAPIIT (Mexico) grant IA100923 and by CONACYT (Mexico) grant A1-S-10457.

\section{Exponential decay}\label{exp:sec}

This section is devoted to the proof of Theorem \ref{mainresult}. As a first step, we extend the argument in \cite[Lemma 5.3]{ackermann2005} to systems. Let $B_r$ denote the ball of radius $r$ in $\rn$ centered at zero. Let $\sigma_i$ and $\beta_{ij}$ as in $(V_2)$ and \eqref{eq:introsystem}, then we let $\boldsymbol{\sigma}:=(\sigma_1,\ldots,\sigma_\ell)$ and $\boldsymbol{\beta}:=(\beta_{ij})_{i,j=1}^\ell$.

\begin{lemma}
\label{normdecay}
Let $V_i\in L^\infty(\R^N)$ satisfy $(V_2)$ and let $\boldsymbol{u}=(u_1,\ldots,u_\ell)$ be a solution of \eqref{eq:introsystem}. Set
\begin{align*}
\xi_{i}(r):=\int_{\mathbb{R}^{N}\smallsetminus B_{r}} \big(|\nabla u_{i}|^{2}+|u_{i}|^{2}\big) \qquad \text{ and } \qquad \bf\xi(r):=(\xi_1(r),\ldots,\xi_\ell(r)).
\end{align*}
Then, there are positive constants $C=C(\boldsymbol u, \boldsymbol\sigma,\boldsymbol\beta,N,\rho,p)$ and $\vartheta=\vartheta(\boldsymbol\sigma)$, with $\rho$ and $\sigma_i$ as in $(V_2)$, such that 
$$|\bf\xi(r)|_1:=\sum_{i=1}^\ell\xi_i(r)\leq C \e^{-\vartheta r}\qquad\text{for every \ }r\geq 0.$$
\end{lemma}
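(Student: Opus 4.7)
The plan is to test the $i$-th equation in \eqref{eq:introsystem} with $\eta_r^2 u_i$, where $\eta_r$ is a smooth cutoff vanishing on $B_r$ and equal to $1$ outside $B_{r+1}$, then sum the resulting integral inequalities over $i$ to close a discrete Gr\"onwall-type recursion for $|\bf\xi(r)|_1$. The preliminary fact I would need is that every component is continuous and decays to zero at infinity: since each $V_i$ is bounded and $u_j\in H^1(\rn)$ for all $j$, a component-wise Brezis--Kato / Moser argument applied to \eqref{eq:introsystem} gives $u_i\in C^0(\rn)\cap L^\infty(\rn)$; the local $L^\infty$--$L^2$ estimate on unit balls together with $u_i\in L^2(\rn)$ then forces $u_i(x)\to 0$ as $|x|\to\infty$. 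This is precisely the ingredient that lets us tame the sublinear factor $|u_i|^{2p-2}$ later.

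Fix $r\geq\rho$ and choose $\eta_r\in C^\infty(\rn)$ with $\eta_r\equiv 0$ on $B_r$, $\eta_r\equiv 1$ on $\rn\smallsetminus B_{r+1}$, and $|\nabla\eta_r|\leq C_0$. Testing the $i$-th equation with $\eta_r^2 u_i$, integrating by parts, applying Young's inequality to the cross term $2\eta_r u_i\nabla u_i\cdot\nabla\eta_r$, and using $V_i\geq\sigma_i$ on $\mathrm{supp}\,\eta_r\subset\rn\smallsetminus B_r$ yields
\begin{equation*}
\kappa_i\,\xi_i(r+1)\leq C_1\bigl(\xi_i(r)-\xi_i(r+1)\bigr)+\sum_{j=1}^\ell|\beta_{ij}|\int_{\rn}\eta_r^2|u_j|^p|u_i|^p,
\end{equation*}
with $\kappa_i:=\min(\tfrac12,\sigma_i)>0$ and a universal $C_1>0$; the linear right-hand term comes from the trivial bound $\int_{B_{r+1}\smallsetminus B_r}u_i^2\leq\xi_i(r)-\xi_i(r+1)$.

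The crux is estimating the nonlinear right-hand side. Young's inequality $|u_j|^p|u_i|^p\leq\tfrac12(|u_j|^{2p}+|u_i|^{2p})$, followed by summation in $i,j$, gives
\begin{equation*}
\sum_{i,j}|\beta_{ij}|\int_{\rn}\eta_r^2|u_j|^p|u_i|^p\leq C(\boldsymbol\beta)\sum_{k=1}^\ell\int_{\rn\smallsetminus B_r}|u_k|^{2p}\leq C(\boldsymbol\beta)\Bigl(\max_k\|u_k\|^{2p-2}_{L^\infty(\rn\smallsetminus B_r)}\Bigr)|\bf\xi(r)|_1,
\end{equation*}
where in the last step I use $|u_k|^{2p}=|u_k|^{2p-2}u_k^2$ together with $2p-2>0$. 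By the preliminary step the bracketed factor tends to $0$ as $r\to\infty$, so one can fix $R_0\geq\rho$ making it smaller than $\kappa/(2C(\boldsymbol\beta))$ with $\kappa:=\min_i\kappa_i$. Summing the earlier inequality over $i$ and inserting this nonlinear bound produces, for every $r\geq R_0$, the recursion
\begin{equation*}
|\bf\xi(r+1)|_1\leq\lambda\,|\bf\xi(r)|_1,\qquad \lambda=\frac{C_1+\kappa/2}{C_1+\kappa}\in(0,1),
\end{equation*}
depending only on $\boldsymbol\sigma$ and $\boldsymbol\beta$. Iterating from $R_0$ and combining with the trivial bound $|\bf\xi(r)|_1\leq\|\bf u\|^2$ for $r\leq R_0$ delivers $|\bf\xi(r)|_1\leq Ce^{-\vartheta r}$ with $\vartheta=-\log\lambda$.

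The one genuine obstacle is the sublinear structure when $1<p<2$, because $|u_i|^{p-2}$ is singular on the nodal set and does not decay at infinity, so an equation-by-equation argument cannot be closed. What saves us is the system structure: the actual nonlinear contribution in the energy identity is $|u_j|^p|u_i|^p$, and Young's inequality redistributes the powers symmetrically to reveal $|u_k|^{2p-2}u_k^2$, in which the exponent $2p-2$ is \emph{positive}. Since every component vanishes at infinity, the total nonlinear contribution on the tail is genuinely small at large radii, and the Gr\"onwall recursion closes in spite of the apparent sublinearity of each individual equation.
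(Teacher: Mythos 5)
Your argument is correct, and its skeleton (annular cutoff test function, discrete recursion for the tail energy, geometric iteration) is the same as the paper's; the genuine difference lies in how the nonlinear term $\sum_k\int_{\rn\smallsetminus B_r}|u_k|^{2p}$ is shown to be negligible compared with $|\boldsymbol\xi(r)|_1$. The paper stays entirely at the $H^1$ level: it invokes an exterior Sobolev embedding with a constant \emph{independent of} $r$ (proved in the appendix via a scaling-uniform extension operator), which yields $\int_{\rn\smallsetminus B_r}|u_k|^{2p}\leq C\,\xi_k(r)^p$ and closes the recursion because $p>1$ and $|\boldsymbol\xi(r)|_1^{p-1}\to 0$. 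You instead write $|u_k|^{2p}=|u_k|^{2p-2}u_k^2$ and use the uniform decay $\|u_k\|_{L^\infty(\rn\smallsetminus B_r)}\to 0$, which requires a preliminary Brezis--Kato/Moser step to get $u_i\in C^0\cap L^\infty$ plus a local $L^\infty$--$L^2$ estimate; this is legitimate under the stated hypotheses (only $V_i\in L^\infty$ is needed for that regularity, and since $2p-1<2^*-1$ the coefficient $|\boldsymbol u|^{2p-2}$ lies in $L^{N/2}$, so there is no circularity with the lemma being proved), and it is essentially the content of the paper's later Lemmas on regularity and pointwise decay. The trade-off: your route imports heavier elliptic machinery up front but avoids the uniform-extension lemma; the paper's route is purely variational and keeps the regularity discussion separate. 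Both yield a decay rate $\vartheta$ depending only on $\boldsymbol\sigma$, with the dependence on $\boldsymbol u$ confined to the prefactor $C$ (through your $R_0$, respectively through the paper's $r_0$), consistent with the statement.
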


\begin{proof}
Let $\chi:\R^N\to\R$ be given by $\chi(r):=0$ if $r\leq 0$, \ $\chi(r):=r$ if $r\in(0,1)$ \ and \ $\chi(r):=1$ if $r\geq 1$. Let $u_{i}^{r}(x):=\chi(|x|-r)u_{i}(x)$ for $r\geq 0$, $x\in\R^N$, and $i=1,\ldots,\ell$. Then $u_{i}^{r}\in H^{1}(\mathbb{R}^{N})$ and
\begin{align*}
u_{i}^{r}(x)&=(|x|-r)u_{i}(x),\qquad
\nabla u_{i}^{r}(x)=(|x|-r)\nabla u_{i}(x)+\frac{x}{|x|}u_{i}(x),\qquad \text{ if \ }x\in B_{r+1}\smallsetminus B_{r}.
\end{align*}
Set $\delta:=\min\{\sigma_1,\ldots,\sigma_\ell,1\}$. Using that $|u_{i}\tfrac{x}{|x|}\cdot\nabla u_{i}|\leq\tfrac{1}{2}(|\nabla u_{i}|^{2}+|u_{i}|^{2})$ we obtain
\begin{align}
\int_{\mathbb{R}^{N}} \big(\nabla u_{i}\cdot\nabla u_{i}^{r}+V_i\,u_{i}u_{i}^{r}\big)&\geq \delta \xi_i(r+1) +\int_{B_{r+1}\smallsetminus B_{r}}\Big[(|x|-r)\left(|\nabla u_{i}|^{2}+V_i\,u_{i}^{2}\right)+u_{i}\frac{x}{|x|}\cdot\nabla u_{i}\Big]\nonumber\\
&\geq \delta \xi_i(r+1)-\frac{1}{2}\int_{B_{r+1}\smallsetminus B_{r}} \big(|\nabla u_{i}|^{2}+|u_{i}|^{2}\big) \nonumber\\
&\geq (\delta+\tfrac{1}{2})\xi_i(r+1)-\tfrac{1}{2}\xi_i(r)\qquad\text{if \ }r+1\geq\rho.\label{eq:bineq}
\end{align}
As $\bf u$ solves \eqref{eq:introsystem} we have that
\begin{align*}
\left\lvert\int_{\mathbb{R}^{N}} \nabla u_{i}\cdot\nabla u_{i}^{r}+V_i\,u_{i}u_{i}^{r}\right\rvert
&=\left\lvert\int_{\mathbb{R}^{N}}\sum_{j=1}^{\ell}\beta_{ij}|u_{j}|^{p}|u_{i}|^{p-2}u_{i}u_{i}^{r}\right\rvert\\
&\leq\sum_{j=1}^{\ell}\int_{\mathbb{R}^{N}\setminus B_{r}}|\beta_{ij}||u_{j}|^{p}|u_{i}|^{p-2}|u_{i}|^{2}
=\sum_{j=1}^{\ell}|\beta_{ij}|\int_{\mathbb{R}^{N}\smallsetminus B_{r}} |u_{j}|^{p}|u_{i}|^{p}
\end{align*}
and since $|u_{m}|^{p}\leq\left(\sum_{k=1}^{\ell}|u_{k}|^{2p}\right)^{1/2}$ for every $m=1,\ldots,\ell$, we obtain  
\begin{align*}
\left\lvert\int_{\mathbb{R}^{N}} \nabla u_{i}\cdot\nabla u_{i}^{r}+V_i\,u_{i}u_{i}^{r}\right\rvert\leq\left(\sum_{j=1}^{\ell}|\beta_{ij}|\right)\sum_{k=1}^{\ell}\int_{\mathbb{R}^{N}\smallsetminus B_{r}}|u_{k}|^{2p}.
\end{align*}
Given that $u_{k}\in H^{1}(\mathbb{R}^{N})$ for all $k=1,\ldots,\ell$,  Lemma \ref{independent} implies the existence of a constant $C_1=C_1(N,p)>0$ such that
\begin{align}
\left\lvert\int_{\mathbb{R}^{N}} \nabla u_{i}\cdot\nabla u_{i}^{r}+V_i\,u_{i}u_{i}^{r}\right\rvert\leq C_{1}\left(\sum_{j=1}^{\ell}|\beta_{ij}|\right)\sum_{k=1}^{\ell}\left(\int_{\mathbb{R}^{N}\smallsetminus B_{r}}\big(|\nabla u_{k}|^{2}+|u_{k}|^{2}\big)\right)^{p}\label{eq:upembed}
\end{align}
for every $r\geq 1$ and $i=1,\ldots,\ell$. Set $C_2:=C_1\sum_{i,j=1}^{\ell}|\beta_{ij}|.$ From \eqref{eq:bineq} and \eqref{eq:upembed}, assuming without loss of generality that $\rho\geq 2$ and adding over $i$, we get
\begin{align*}
\frac{2\delta+1}{2}\,|\bf\xi(r+1)|_1-\frac{1}{2}|\bf\xi(r)|_1\leq C_2\sum_{k=1}^{\ell}|\xi_{k}(r)|^{p}=:C_2\,|\bf\xi(r)|_{p}^{p}\qquad\text{if \ }r+1\geq\rho.
\end{align*}
Therefore,
\begin{align}
\frac{|\bf\xi(r+1)|_1}{|\bf\xi(r)|_1}\leq\frac{1}{2\delta+1}\left(1+2C_2\frac{|\bf\xi(r)|_{p}^{p}}{|\bf\xi(r)|_1}\right)\leq \frac{1}{2\delta+1}\left(1+2C_2|\bf\xi(r)|_{1}^{p-1}\right) =:\gamma(r)\quad\text{if \ }r+1\geq\rho.\label{eq:ackineq}
\end{align}
Since $|\bf\xi(r)|_{1}\to 0$ as $r\to\infty,$ there is $r_{0}=r_0(\bf u,p,\boldsymbol\beta,\rho)\in\n$ such that $r_0\geq\rho$ and $\gamma(r)\leq \gamma_0^{-1}$ for all $r\geq r_{0}$ with $\gamma_0:=\frac{2\delta+1}{\delta+1}>1$. Then, for $r>r_0+1$,
\begin{align*}
|\bf\xi(r)|_1\leq |\bf\xi(\lfloor r \rfloor)|_1=|\bf\xi(r_0)|_1\prod_{k=r_0}^{\lfloor r \rfloor-1}\frac{|\bf\xi(k+1)|_1}{|\bf\xi(k)|_1}
\leq |\bf\xi(r_0)|_1\gamma_0^{r_0-\lfloor r \rfloor}
\leq \|\bf u\|^2\gamma_0^{r_0-r+1},
\end{align*}
where $\lfloor r \rfloor$ denotes the floor of $r$. Since $|\bf\xi(r)|_1\leq \|\bf u\|^2\leq\|\bf u\|^2\gamma_0^{r_0-r+1}$ for $r\leq r_0+1$ we have that
$$|\bf\xi(r)|_1\leq\|\bf u\|^2\gamma_0^{r_0-r+1}=\|\bf u\|^2\gamma_0^{r_0+1} \e^{-\ln(\gamma_0)r}\qquad\text{for every \ }r\geq 0,$$
as claimed.
\end{proof}

\begin{lemma}
\label{sourceN}
Assume $(V_1)$ and let $\boldsymbol{u}=(u_1,\ldots,u_\ell)$ be a solution of \eqref{eq:introsystem}. Then $u_i\in W^{2,s}(\mathbb{R}^{N})\cap\mathcal{C}^{2}(\mathbb{R}^{N})$ for every $s\geq 2$ and $i=1,\ldots,\ell$.
\end{lemma}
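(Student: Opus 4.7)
The plan is to run an elliptic bootstrap on each equation of the system \eqref{eq:introsystem}. First, I would rewrite the $i$-th equation as
$$-\Delta u_i+u_i=g_i,\qquad g_i:=(1-V_i)u_i+\sum_{j=1}^\ell \beta_{ij}|u_j|^p|u_i|^{p-2}u_i.$$
Even though $|u_i|^{p-2}$ is singular when $p<2$, the product $|u_i|^{p-2}u_i=|u_i|^{p-1}\operatorname{sgn}(u_i)$ has modulus at most $|u_i|^{p-1}$, so
$$|g_i(x)|\le C\Big(|u_i(x)|+\sum_{k=1}^\ell|u_k(x)|^{2p-1}\Big)\quad\text{for a.e.~}x\in\R^N,$$
with $C$ depending on $\|V_i\|_\infty$ and the $|\beta_{ij}|$. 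In particular, there is no singularity at the nodal sets of the components, which is the crucial point that rescues the argument when $p<2$.

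The first step is a bootstrap raising the integrability of each $u_i$ up to $L^\infty$. For $N\ge 3$, Sobolev embedding yields $u_i\in L^{2^*}(\R^N)$; the cases $N=1,2$ are simpler since $H^1(\R^N)$ embeds directly into $L^\infty$ or into every $L^q$ with $q<\infty$. Assuming $u_i\in L^q(\R^N)$ for all $i$ with $q\ge 2$, the pointwise bound above gives $g_i\in L^{q/(2p-1)}(\R^N)$, with $q/(2p-1)\in(1,\infty)$ since $p>1$. Calder\'on--Zygmund theory for the resolvent $-\Delta+\mathrm{Id}$ on $\R^N$ then forces $u_i\in W^{2,q/(2p-1)}(\R^N)$, and Sobolev embedding produces $u_i\in L^{q'}(\R^N)$ with
$$q'=\tfrac{Nq}{(2p-1)N-2q}\quad\text{if }(2p-1)N>2q,\qquad\text{or }u_i\in L^\infty(\R^N)\quad\text{otherwise.}$$
A direct calculation shows $q'>q$ precisely when $q>N(p-1)$, and the subcritical condition $p<2^*/2=N/(N-2)$ yields $N(p-1)<2^*$, so the iteration strictly advances from $q_0=2^*$ and, after finitely many steps, $u_i\in L^\infty(\R^N)$.

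Once $u_i\in L^\infty(\R^N)\cap L^2(\R^N)$, interpolation gives $u_i\in L^q(\R^N)$ for every $q\in[2,\infty]$, and therefore $g_i\in L^s(\R^N)$ for every $s\ge 2$. A final application of $L^s$ regularity yields $u_i\in W^{2,s}(\R^N)$ for every $s\ge 2$. Picking $s>N$ and using Morrey's embedding, $u_i\in \cC^{1,\alpha}(\R^N)$ for some $\alpha\in(0,1)$. Since $V_i$ is H\"older continuous by $(V_1)$ and the map $t\mapsto|t|^{p-1}\operatorname{sgn}(t)$ is (locally) H\"older continuous, $g_i$ is locally H\"older continuous, and local Schauder estimates promote $u_i$ to $\cC^{2,\alpha}_{\mathrm{loc}}(\R^N)\subset \cC^2(\R^N)$.

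The main obstacle is guaranteeing that the bootstrap step strictly improves integrability; this is exactly where the subcritical restriction $p<2^*/2$ is essential, because otherwise $N(p-1)\ge 2^*$ and the iteration would stall at the Sobolev exponent. The system structure, by contrast, plays no delicate role here: the nonlinear coupling gets absorbed into a pointwise bound for $g_i$, so the argument is indifferent to the signs of the $\beta_{ij}$. This is in sharp contrast with the proof of the exponential decay, where the coupling must be treated carefully.
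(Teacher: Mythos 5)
Your proof is correct and follows essentially the same route as the paper: bound the coupling pointwise by $C|\boldsymbol{u}|^{2p-1}$, bootstrap through $L^s$ elliptic regularity and Sobolev embeddings (using $p<2^*/2$ to guarantee a strict gain at each step) until $u_i\in L^\infty(\R^N)$, hence $u_i\in W^{2,s}(\R^N)$ for all $s\ge 2$, and conclude with Schauder estimates; the paper merely compresses the iteration into the phrase ``using a bootstrapping argument.'' The only cosmetic difference is that you move $V_iu_i$ to the right-hand side, which calls for a one-line remark that the linear term $(1-V_i)u_i$ lies in $L^{q}$ rather than $L^{q/(2p-1)}$ when the latter exponent drops below $2$ (split $g_i$ into the two pieces and use linearity of the resolvent, or work with interior estimates on unit balls with uniform constants, as the paper implicitly does).
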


\begin{proof}
Let $N\geq 3$. The argument for $N=1,2$ is similar and easier. For each $i=1,\ldots,\ell$ set
\begin{align} \label{eq:f_i}
f_{i}:=\sum_{j=1}^{l}\beta_{ij}|u_{j}|^{p}|u_{i}|^{p-2}u_{i}.
\end{align}
Since $|u_{k}|\leq |\boldsymbol{u}|:=\sqrt{u_1^2+\cdots+u_\ell^2}$ for every $k=1,\ldots\ell$, we have that
\begin{align}
\label{eq:fuestimate}
|f_{i}|\leq\sum_{i,j=1}^{\ell}|\beta_{ij}||u_{j}|^{p}|u_{i}|^{p-1}\leq\left(\sum_{j=1}^{\ell}|\beta_{ij}|\right)|\boldsymbol{u}|^{p}|\boldsymbol{u}|^{p-1}\leq\left(\sum_{i,j=1}^{\ell}|\beta_{ij}|\right)|\boldsymbol{u}|^{2p-1}.
\end{align}
Therefore, $f_{i}\in L^{s_{1}}(\mathbb{R}^{N})$ for $s_{1}:=\frac{2^{\ast}}{2p-1}>1$ and, by the standard $L^{p}$-elliptic regularity theory, $u_{i}\in W^{2,s_{1}}(\mathbb{R}^{N})$ for all $i=1,\ldots,\ell$ (see, e.g., \cite[Chapter 9]{gilbarg} or \cite[Section 3.2]{villavert}). Using a bootstrapping argument, we conclude the existence of $s>\max\{\frac{N}{2},2\}$ such that $u_i\in W^{2,s}(\mathbb{R}^{N})$ for all $i=1,\ldots,\ell$ and thus, by the Sobolev embedding theorem, $u_i\in \cC^{1,\alpha}(\rn)$. Since $V_i$ is Hölder continuous and bounded, applying the Schauder estimates repeatedly, we deduce that $u_i$ is of class $\cC^{2}$ (see \cite[Section 1.3]{han2016}).
\end{proof}

In the rest of the paper, we write $|\cdot|_t$ for the norm in $L^t(\rn)$, $1\leq t\leq\infty$.  If $\boldsymbol{u}=(u_1,\ldots,u_\ell)\in [L^\infty(\R^N)]^\ell$, then $|\boldsymbol{u}|_\infty:=\sum_{i=1}^\ell \sup_{\R^N}|u_i|$. Moreover, for a proper open subset $\o$ of $\rn$ we denote the usual Sobolev norm in $H^1(\o)$ by $\|\cdot\|_{H^1(\o)}$, i.e.,
$$\|u\|_{H^1(\o)}^2:=\io (|\nabla u|^2+|u|^2).$$

\begin{lemma} \label{lem:estimate}
Assume $(V_1)$. Let $\boldsymbol{u}=(u_1,\ldots,u_\ell)$ be a solution of \eqref{eq:introsystem}, $s>\max\{2,\frac{N}{2}\}$ and $\Lambda>0$ be such that $|V_i|_\infty\leq \Lambda$ for $i=1,\ldots,\ell$. Then there is a constant $C=C(\boldsymbol \beta, N, p, \Lambda,s)>0$ such that, for any $x\in\rn$,
\begin{align*}
\|u_{i}\|_{W^{2,s}(B_\frac{1}{2}(x))}\leq C\left(|u_{i}|_\infty^{\frac{s-2}{s}}\|u_{i}\|_{H^1(B_1(x))}^{\frac{2}{s}}+|\boldsymbol{u}|^{\frac{2ps-(s+2)}{s}}_\infty\Big(\sum_{j=1}^\ell \|u_j\|_{H^1(B_1(x))}^2\Big)^{\frac{p}{s}}\right),
\end{align*}
where $|\boldsymbol{u}|:=\sqrt{u_1^2+\cdots+u_\ell^2}$ and $B_R(x)$ is the ball of radius $R$ centered at $x$.
\end{lemma}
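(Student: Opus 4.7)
The plan is to view the $i$-th equation of \eqref{eq:introsystem} as a Poisson-type equation for $u_{i}$, apply standard interior $L^{s}$-regularity on the concentric pair $B_{1/2}(x)\subset B_{1}(x)$, and bound the two resulting $L^{s}$-norms on the right-hand side by interpolating between $L^{\infty}(\rn)$ and a quantity controlled by the local $H^{1}$-norm. Concretely, writing the equation as $-\Delta u_{i}=f_{i}-V_{i}u_{i}$ with $f_{i}$ as in \eqref{eq:f_i}, Lemma \ref{sourceN} yields $u_{i}\in W^{2,s}_{\mathrm{loc}}(\rn)$, and the Calder\'on--Zygmund interior estimate together with $|V_{i}|_{\infty}\leq\Lambda$ gives
\begin{equation*}
\|u_{i}\|_{W^{2,s}(B_{1/2}(x))}\leq C_{0}\bigl(\|f_{i}\|_{L^{s}(B_{1}(x))}+\|u_{i}\|_{L^{s}(B_{1}(x))}\bigr),
\end{equation*}
with $C_{0}=C_{0}(N,s,\Lambda)$ independent of $x\in\rn$ by translation invariance of $-\Delta$.

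For $\|u_{i}\|_{L^{s}(B_{1}(x))}$ I would use the elementary pointwise inequality $|u_{i}|^{s}\leq |u_{i}|_{\infty}^{s-2}\,u_{i}^{2}$ and integrate, obtaining
\begin{equation*}
\|u_{i}\|_{L^{s}(B_{1}(x))}^{s}\leq |u_{i}|_{\infty}^{s-2}\,\|u_{i}\|_{H^{1}(B_{1}(x))}^{2},
\end{equation*}
which, after taking an $s$-th root, is exactly the first summand of the claim.

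For $\|f_{i}\|_{L^{s}(B_{1}(x))}$ I would plug in the pointwise bound $|f_{i}|\leq c_{\boldsymbol\beta}|\boldsymbol{u}|^{2p-1}$ from \eqref{eq:fuestimate}, raise to the $s$-th power, and decompose $|\boldsymbol{u}|^{s(2p-1)}$ by a pointwise interpolation of the form $|\boldsymbol{u}|^{s(2p-1)}=|\boldsymbol{u}|_{\infty}^{s(2p-1)-\gamma}\,|\boldsymbol{u}|^{\gamma}$ that extracts a power of $|\boldsymbol{u}|_{\infty}$ and leaves $|\boldsymbol{u}|^{\gamma}$ for integration. The remaining integral is controlled by Sobolev embedding: either $\gamma=2$ gives $\int_{B_{1}(x)}|\boldsymbol{u}|^{2}\leq\sum_{j}\|u_{j}\|_{H^{1}(B_{1}(x))}^{2}$ directly, or $\gamma=2p$ uses $H^{1}(B_{1}(x))\hookrightarrow L^{2p}(B_{1}(x))$, which applies because $2p\in(2,2^{*})$ by the hypothesis $1<p<2^{*}/2$, and then the convexity inequality $\sum_{j}a_{j}^{2p}\leq\bigl(\sum_{j}a_{j}^{2}\bigr)^{p}$ (valid for $p\geq 1$) yields
\begin{equation*}
\int_{B_{1}(x)}|\boldsymbol{u}|^{2p}\leq C_{1}\Bigl(\sum_{j=1}^{\ell}\|u_{j}\|_{H^{1}(B_{1}(x))}^{2}\Bigr)^{p},\qquad C_{1}=C_{1}(N,p,\ell).
\end{equation*}
Assembling these estimates and taking an $s$-th root produces the second summand, with the constant absorbed into $C=C(\boldsymbol{\beta},N,p,\Lambda,s)$.

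The main obstacle I anticipate is the bookkeeping of interpolation exponents: one must check that the extracted power of $|\boldsymbol{u}|_{\infty}$ is nonnegative, which follows automatically from $s>\max\{2,N/2\}$ and $p>1$ (so that $s(2p-1)\geq 2p\geq 2$), and then choose the split so that both exponents in the stated second summand come out correctly. Beyond this matching of indices, the argument uses only elliptic regularity, Sobolev embedding and elementary interpolation, so gathering the three steps above delivers the claim.
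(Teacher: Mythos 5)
Your proposal is correct and follows essentially the same route as the paper: the interior $L^{s}$ elliptic estimate on the pair $B_{1/2}(x)\subset B_{1}(x)$ with an $x$-independent constant, the pointwise bound $|u_{i}|^{s}\leq|u_{i}|_{\infty}^{s-2}u_{i}^{2}$ for the zero-order term, and the splitting of $|f_{i}|^{s}$ into a power of $|\boldsymbol{u}|_{\infty}$ times $|\boldsymbol{u}|^{2p}$, handled via $H^{1}(B_{1}(x))\hookrightarrow L^{2p}(B_{1}(x))$ and the convexity inequality $\sum_{j}a_{j}^{2p}\leq\bigl(\sum_{j}a_{j}^{2}\bigr)^{p}$. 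The only caveat is that the split $\gamma=2p$ extracts $|\boldsymbol{u}|_{\infty}^{(2p-1)s-2p}$ rather than the exponent $2ps-(s+2)$ displayed in the statement, but this harmless mismatch is present in the paper's own computation as well and does not affect the use of the lemma.
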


\begin{proof}
Since $u_i\in W^{2,s}(\rn)\subset L^{\infty}(\mathbb{R}^{N})$, we have that
\begin{align*}
|u_{i}|^{s}=|u_{i}|^{s-2}|u_{i}|^{2}\leq|u_{i}|_\infty^{s-2}|u_{i}|^{2}.
\end{align*}
Set $f_i$ as in \eqref{eq:f_i}. By \eqref{eq:fuestimate}, there is a constant $C_2=C_2(\boldsymbol\beta)$ such that
\begin{align*}
|f_{i}|^{s}&\leq C_2^{s}|\boldsymbol{u}|^{(p-1)s}|\boldsymbol{u}|^{ps}=C_2^{s}|\boldsymbol{u}|^{(p-1)s+p(s-2)}( u_1^2+\cdots+u_\ell^2)^{p}\\
&\leq C_2^{s}|\boldsymbol{u}|_\infty^{2ps-(s+2)}\ell^p(u_1^{2p}+\cdots+u_\ell^{2p}),
\end{align*}
where $(p-1)s+p(s-2)>0$. Then, by \cite[Theorem 9.11]{gilbarg}, there is a positive constant $C_1=C_1(s,N,\Lambda)$ such that
\begin{align*}
\|u_{i}\|_{W^{2,s}(B_\frac{1}{2}(x))}\leq C_1\left(|u_{i}|_{L^s(B_1(x))}+|f_{i}|_{L^s(B_1(x))}\right)\quad\text{for any \ }x\in\rn.
\end{align*}From the previous inequalities we derive
\begin{align*}
\|u_{i}\|_{W^{2,s}(B_\frac{1}{2}(x))}\leq C_1\left(|u_{i}|^{\frac{s-2}{s}}_\infty\|u_{i}\|_{H^1(B_1(x)))}^{\frac{2}{s}}+C_2\ell^\frac{p}{s}C_3|\boldsymbol u|_\infty^{\frac{2ps-(s+2)}{s}}\Big(\sum_{j=1}^\ell \|u_j\|_{H^1(B_1(x))}^2\Big)^{\frac{p}{s}}\right),
\end{align*}
where $C_3=C_3(N,p)$ is the constant given by the Sobolev embedding $H^1(B_1)\subset L^{2p}(B_1)$.
\end{proof}

\begin{lemma} \label{expdecay1}
Assume $(V_1)-(V_2)$, let $\boldsymbol{u}=(u_1,\ldots,u_\ell)$ be a solution of \eqref{eq:introsystem} and let $f_i$ be as in \eqref{eq:f_i}. Then, there are constants $\eta>0$, $C_1>0$, and $C_2>0$ such that
\begin{align*}
|u_i(x)|\leq C_1 e^{-\eta|x|},\qquad 
|f_{i}(x)|\leq C_2 e^{-(2p-1)\eta |x|},\qquad\mbox{for all}~x\in\mathbb{R}^{N}\text{ and }i=1,\ldots,\ell.
\end{align*}
\end{lemma}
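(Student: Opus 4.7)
The plan is to combine the tail $H^1$-decay from Lemma \ref{normdecay} with the local $W^{2,s}$-estimate of Lemma \ref{lem:estimate} and the Sobolev embedding $W^{2,s}\hookrightarrow L^\infty$ on balls of fixed radius (valid for any $s>N/2$) to convert decay of $L^2$-tails into pointwise exponential decay. The estimate on $f_i$ will then drop out of \eqref{eq:fuestimate}.

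First I would fix an exponent $s>\max\{2,N/2\}$, so that $W^{2,s}(B_{1/2}(x))\hookrightarrow \cC(\overline{B_{1/2}(x)})$ with a Sobolev constant $C_S$ independent of $x$ (by translation invariance). By Lemma \ref{sourceN}, each $u_i\in W^{2,s}(\R^N)\cap \cC^2(\R^N)\subset L^\infty(\R^N)$, so $|\boldsymbol u|_\infty<\infty$.

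Next I would estimate $|u_i(x)|$ for $|x|\geq 2$. Since $B_1(x)\subset\R^N\smallsetminus B_{|x|-1}$, Lemma \ref{normdecay} yields
$$\|u_j\|_{H^1(B_1(x))}^2\leq \xi_j(|x|-1)\leq |\boldsymbol\xi(|x|-1)|_1\leq C\,\e^{-\vartheta(|x|-1)},\qquad j=1,\ldots,\ell.$$
Plugging this into Lemma \ref{lem:estimate} and absorbing the fixed factors $|u_i|_\infty^{(s-2)/s}$ and $|\boldsymbol u|_\infty^{(2ps-s-2)/s}$ into the constant, I obtain
$$\|u_i\|_{W^{2,s}(B_{1/2}(x))}\leq C\bigl(\e^{-(\vartheta/s)(|x|-1)}+\e^{-(p\vartheta/s)(|x|-1)}\bigr).$$
Because $p>1$, the first summand is the slower one, so the Sobolev embedding gives
$$|u_i(x)|\leq \|u_i\|_{L^\infty(B_{1/2}(x))}\leq C_S\|u_i\|_{W^{2,s}(B_{1/2}(x))}\leq C'\,\e^{-\eta |x|},\qquad \eta:=\vartheta/s>0.$$
For $|x|<2$, the bound $|u_i(x)|\leq |u_i|_\infty\leq (|u_i|_\infty\,\e^{2\eta})\,\e^{-\eta|x|}$ is trivial, so after enlarging the constant I arrive at $|u_i(x)|\leq C_1\,\e^{-\eta|x|}$ on all of $\R^N$.

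Finally, the estimate on $f_i$ would follow by inserting this pointwise bound into \eqref{eq:fuestimate}: since $|\boldsymbol u(x)|^2\leq \ell\,C_1^2\,\e^{-2\eta|x|}$, we would get $|f_i(x)|\leq C|\boldsymbol u(x)|^{2p-1}\leq C_2\,\e^{-(2p-1)\eta|x|}$. The main technical point is the exponent bookkeeping in Lemma \ref{lem:estimate}: one must verify that raising the two tail quantities to the powers $2/s$ and $p/s$ both yield exponentially decaying bounds, and then invoke $p>1$ to single out $\vartheta/s$ as the slower, dominant rate. This is precisely where the superlinear character of the system, rather than of the individual (possibly sublinear) equations, is harnessed at the pointwise level.
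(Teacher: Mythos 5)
Your proposal is correct and follows essentially the same route as the paper: tail $H^1$-decay from Lemma \ref{normdecay}, the local $W^{2,s}$ bound of Lemma \ref{lem:estimate} combined with the translation-invariant embedding $W^{2,s}(B_{1/2})\subset L^\infty(B_{1/2})$, a trivial bound on a bounded set, and then \eqref{eq:fuestimate} for $f_i$. The only (immaterial) difference is your choice $r=|x|-1$ in place of the paper's $r=|x|/2$, which merely changes the admissible value of $\eta$.
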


\begin{proof}
For $x\in\rn$ with $|x|\geq 2$, set $r:=\frac{1}{2}|x|$. Then, $B_1(x)\subset\rn\smallsetminus B_r$ and, by Lemma \ref{normdecay}, there are positive constants $K_1=K_1(\boldsymbol u,\boldsymbol\sigma,\boldsymbol\beta,N,\rho,p)$ and $\vartheta=\vartheta(\boldsymbol\sigma)$, with $\rho$ and $\sigma_i$ as in $(V_2)$, such that
$$\|u_j\|_{H^1(B_1(x))}^2\leq\|u_j\|_{H^1(\rn\smallsetminus B_r)}^2=\xi_j(r)\leq \sum_{i=1}^\ell\xi_i(r)\leq K_1\e^{-\vartheta r}\quad\text{for every \ }j=1,\ldots,\ell.$$
Fix $s>\max\{\frac{N}{2},2\}$. By Lemma \ref{lem:estimate} there are positive constants $K_2=K_2(\bf u, \boldsymbol\beta, N, p, \Lambda,s)$ and $K_3=K_3(\boldsymbol u,\boldsymbol\sigma,\boldsymbol\beta,\rho,N,p,s)$ such that
$$\|u_{i}\|_{W^{2,s}(B_\frac{1}{2}(x))}\leq K_2\left(\|u_{i}\|_{H^1(B_1(x)))}^{\frac{2}{s}}+\Big(\sum_{j=1}^\ell \|u_j\|_{H^1(B_1(x))}^2\Big)^{\frac{p}{s}}\right)\leq K_2K_3\e^{-\frac{\vartheta}{s}r}.$$
Therefore,
$$|u_i(x)|\leq |u_i|_{L^\infty(B_\frac{1}{2}(x))}\leq K_4\|u_{i}\|_{W^{2,s}(B_\frac{1}{2}(x))}\leq K_2K_3K_4\e^{-\frac{\vartheta}{2s}|x|}\quad\text{for every \ }x\in\rn\smallsetminus B_2,$$
where $K_4$ is the positive constant given by the embedding $W^{2,s}(B_\frac{1}{2})\subset L^\infty(B_\frac{1}{2})$. Since $u_i$ is continuous, we may choose $C_1\geq K_2K_3K_4$ such that $|u_i(x)|\leq C_1\e^{-\frac{\vartheta}{s}}$ for every $x\in B_2$. So, setting $\eta:=\frac{\vartheta}{2s}$, we obtain
$$|u_i(x)|\leq C_1\e^{-\eta|x|}\quad\text{for every \ }x\in\rn.$$
The estimate for $f_i$ follows immediately from \eqref{eq:fuestimate}.
\end{proof}

The following result is a particular case of \cite[Theorem 2.1]{RS00}. We include a simplified proof for completeness.

\begin{lemma}\label{lem:pm}
Assume that $V:\R^N\to\R$ satisfies $\sigma:=\inf_{\R^N\backslash B_{\rho}(0)} V>0$ for some $\rho\geq 0$. Let $w$ be a classical solution of $-\Delta w + V w = f$ in $\R^N$ such that
\begin{align*}
|w(x)|\leq C \e^{-\eta|x|}\quad \text{ and }\quad |f(x)|\leq C \e^{-\delta|x|}\qquad \text{ for all }x\in\R^N
\end{align*}
and for some constants $C>0$, $\eta\in(0,\sqrt{\sigma})$ and $\delta\in(\eta,\sqrt{\sigma}]$. Then, for any $\mu\in(\eta,\delta)$, there is $M=M(\mu,\delta,\rho,\sigma,C)>0$ such that 
\begin{align*}
|w(x)|\leq M \e^{-\mu|x|}\qquad \text{for all \ }x\in\R^N.
\end{align*}
\end{lemma}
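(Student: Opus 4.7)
The plan is to dominate $|w|$ by a radial exponential barrier $\phi(x):=A\,\e^{-\mu|x|}$ outside a large ball via a weak maximum principle argument, and then to absorb the bounded region $|x|\leq R$ using the trivial estimate $|w|\leq C$. The starting point is the direct computation, valid for $x\neq 0$,
\[
-\Delta\e^{-\mu|x|}+V(x)\,\e^{-\mu|x|}=\Bigl(V(x)-\mu^2+\tfrac{(N-1)\mu}{|x|}\Bigr)\e^{-\mu|x|}\geq(\sigma-\mu^2)\,\e^{-\mu|x|}\quad\text{on }\R^N\setminus B_\rho,
\]
where the lower bound uses $(V_2)$ and the positivity of $\sigma-\mu^2$ comes from $\mu<\sqrt{\sigma}$. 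Hence $\phi_0(x):=\e^{-\mu|x|}$ is a positive strict supersolution of $-\Delta u+Vu=0$ in the exterior of $B_\rho$.

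Next I would fix $R:=\max\{\rho,1\}$ and choose $A>0$ so large that both $A\geq C/(\sigma-\mu^2)$ and $A\geq C\,\e^{(\mu-\eta)R}$. The first condition, combined with $\delta>\mu$, forces
\[
-\Delta(A\phi_0)+V(A\phi_0)\geq A(\sigma-\mu^2)\,\e^{-\mu|x|}\geq C\,\e^{-\delta|x|}\geq|f(x)|\quad\text{for }|x|\geq R,
\]
while the second guarantees $A\phi_0(x)\geq C\,\e^{-\eta|x|}\geq|w(x)|$ on $\partial B_R$. Setting $\psi_{\pm}:=A\phi_0\mp w$, the equation for $w$ then yields $-\Delta\psi_{\pm}+V\psi_{\pm}\geq|f|\mp f\geq 0$ in $\R^N\setminus\overline{B_R}$, with $\psi_\pm\geq 0$ on $\partial B_R$ and $\psi_\pm(x)\to 0$ as $|x|\to\infty$ since both $w$ and $A\phi_0$ vanish at infinity.

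I would then close the argument by applying the weak maximum principle for $-\Delta+V$ (with $V\geq\sigma>0$) on the bounded annuli $\{R<|x|<R_n\}$: the standard bound $\psi_\pm\geq-\sup_{\partial B_{R_n}}\psi_\pm^-$ on each annulus, together with the vanishing of the right-hand side as $R_n\to\infty$, forces $\psi_\pm\geq 0$ throughout $\R^N\setminus B_R$, i.e.\ $|w(x)|\leq A\,\e^{-\mu|x|}$ there. Combining this with $|w(x)|\leq C\leq (C\,\e^{\mu R})\e^{-\mu|x|}$ for $|x|\leq R$ gives the full estimate with $M:=\max\{A,C\,\e^{\mu R}\}$, whose dependence is exactly on $\mu,\delta,\rho,\sigma,C$. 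The main delicate point is the use of the maximum principle on an unbounded domain, which requires passing through annuli and exploiting the decay of $w$ and $\phi$ at infinity; the strict inequality $\mu<\sqrt{\sigma}$ is essential here, as it provides the positive zero-order coefficient $\sigma-\mu^2$ that allows $A\phi_0$ to be a supersolution of the inhomogeneous equation once $A$ is taken large enough.
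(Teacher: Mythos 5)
Your argument is correct and follows essentially the same route as the paper's proof: the same barrier $\e^{-\mu|x|}$, the same supersolution computation using $\sigma-\mu^2>0$ and $\delta>\mu$, and a comparison on the exterior of a ball. The only (immaterial) difference is the device for the maximum principle on the unbounded domain — you exhaust by annuli and let the outer boundary contribution vanish, whereas the paper argues by contradiction with the supremum of $w-t\e^{-\mu|x|}$; both are valid.
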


\begin{proof}
Let $\rho,\sigma,\eta,\delta,\mu,$ and $C$ be as in the statement. Set $v(x):=\e^{-\mu|x|}$ for $x\in\R^N$.  Then, 
$$\Delta v(x)=v(x) h(|x|)\quad\text{for \ }x\in \R^N\smallsetminus\{0\},\qquad\text{where \ } h(r):=\mu^2-(N-1)\frac{\mu}{r}.$$
In particular,  $V(x)-h(|x|)\geq \sigma-\mu^2=:\eps>0$ for $|x|>\rho$. Fix $t\in\r$ satisfying
\begin{align}\label{teq}
t>\frac{C}{\eps} \e^{(\mu-\delta)\rho}\qquad \text{and}\qquad w(x)<tv(x)\text{ \ for \ }|x|=\rho.
\end{align}
 We claim that $w(x)\leq tv(x)$ for all $|x|>\rho$. Indeed, let $z:=w-tv$ and assume, by contradiction, that $m:=\sup_{|x|\geq \rho}z(x)>0$. Since $\lim_{|x|\to\infty}z(x)=0$, there is $R>\rho$ such that $z(x)\leq \frac{m}{2}$ for $|x|\geq R$. Let $\Omega:=\{x\in\R^N\::\: \rho<|x|<R\text{ and }z(x)>0\}$. Then $z\leq \frac{m}{2}$ on $\partial \Omega$ and, by \eqref{teq},
\begin{align*}
-\Delta z(x)&=-\Delta w(x)+t\Delta v(x)
= f(x)- V(x)w(x)+tv(x)h(|x|)\\
&= f(x)- V(x)z(x)+tv(x)(h(|x|)-V(x))\\
&<C \e^{-\delta|x|}-\eps tv(x)
=C \e^{-\delta|x|}-\eps t \e^{-\mu|x|}<0\quad\text{ \ for every \ }x\in\o.
\end{align*}
Then, by the maximum principle, $m=\max_{\Omega}z = \max_{\partial \Omega}z\leq\frac{m}{2}$. This is a contradiction. Therefore $m\leq 0$, namely, $w(x)\leq t \e^{-\mu|x|}$ for all $|x|\geq \rho$. Arguing similarly for $-w$ and using that $w\in L^\infty(\R^N)$ we obtain that $|w(x)|\leq M\e^{-\mu|x|}$ for all $x\in\rn$, as claimed.
\end{proof}

We are ready to prove Theorem \ref{mainresult}.

\begin{proof}[Proof of Theorem \ref{mainresult}]
Iterating Lemmas \ref{expdecay1} and \ref{lem:pm}, using that $2p-1>1$, one shows that, for any $\mu_i\in(0,\sqrt{\sigma_i})$, there is $C>0$ such that  $|u_{i}(x)|\leq C e^{-\mu_i|x|}$ for all $x\in\R^N$ and for all $i=1,\ldots,\ell$.  

Now, assume that $V_i\equiv 1$ for every $i=1,\ldots,\ell$ and let $\mu\in(0,1)$ be such that $(2p-1)\mu>1$.  By Lemma \ref{expdecay1}, we have that $|f_{i}(x)|\leq C_2 \e^{-(2p-1)\mu |x|}$ for all $x\in\mathbb{R}^{N}$. The claim now follows from \cite[Theorem 2.3$(c)$]{ackermann2016}.
\end{proof}

\section{Energy estimates for seminodal solutions}\label{ee:sec}

In this section we prove Theorem \ref{thm:main2}. Consider the autonomous system \eqref{eq:system} where $N\geq 4$, $1<p<\frac{N}{N-2}$ and $\beta_{ij}$ satisfy the assumption $(B_1)$ stated in the Introduction. According to the decomposition given by $(B_1)$, a solution $\bf u=(u_1,\ldots,u_\ell)$ to \eqref{eq:system} may be written in block-form as
\[\bf u=(\bar u_1,\ldots,\bar u_q)\qquad\text{with \ }\bar u_h=(u_{\ell_{h-1}+1},\ldots,u_{\ell_h}),\quad h=1,\ldots,q.\]
$\bf u$ is called \emph{fully nontrivial} if every component $u_i$ is different from zero. We say that $\bf u$ is \emph{block-wise nontrivial} if at least one component in each block $\bar u_h$ is nontrivial.

Following \cite{clapp2022}, we introduce suitable symmetries to produce a change of sign in some components. Let $G$ be a finite subgroup of the group $O(N)$ of linear isometries of $\rn$ and denote by $Gx:=\{gx:g\in G\}$ the $G$-orbit of $x\in\rn$. Let $\phi:G\to\z_2:=\{-1,1\}$ be a homomorphism of groups. A function $u:\rn\to\r$ is called \emph{$G$-invariant} if it is constant on $Gx$ for every $x\in\rn$ and it is called \emph{$\phi$-equivariant} if
\begin{equation} \label{eq:equivariant}
u(gx)=\phi(g)u(x) \text{ \ for all \ }g\in G, \ x\in\rn.
\end{equation}
Note that, if $\phi\equiv 1$ is the trivial homomorphism and $u$ satisfies \eqref{eq:equivariant}, then $u$ is $G$-invariant. On the other hand, if $\phi$ is surjective every nontrivial function satisfying \eqref{eq:equivariant} is nonradial and changes sign. Define 
$$H^1(\rn)^\phi:=\{u\in H^1(\rn): u\text{ is }\phi\text{-equivariant}\}.$$
For each $h=1,\ldots,q$, fix a homomorphism $\phi_h:G\to\z_2$. Take $\phi_i := \phi_h$ for all $i\in I_h$ and set $\bf\phi=(\phi_1,\ldots,\phi_\ell)$. Denote by
\begin{align*}
 \cH^{\bf\phi} &:=H^1(\rn)^{\phi_1}\times\cdots\times H^1(\rn)^{\phi_\ell},
\end{align*}
and let $\cJ^{\bf\phi}:\cH^{\bf\phi}\to\r$ be the functional given by 
$$\cJ^{\bf\phi}(\bf u) := \frac{1}{2}\dsum_{i=1}^{\ell}\|u_i\|^2 - \frac{1}{2p}\dsum_{i,j=1}^\ell\beta_{ij}\irn |u_i|^p|u_j|^p.$$
This functional is of class $\cC^1$ and its critical points are the solutions to the system \eqref{eq:system} satisfying \eqref{eq:equivariant}. The block-wise nontrivial solutions belong to the Nehari set
$$\cN^{\bf\phi}:= \{\bf u\in\cH^{\bf\phi}:\|\bar u_h\|\neq 0\text{ \ and \ }\partial_{\bar u_h}\cJ^{\bf\phi}(\bf u)\bar u_h=0 \text{ \ for every \ } h=1,\ldots,\ell\}.$$
Note that
\begin{align*}
&\partial_{\bar u_h}\cJ^{\bf\phi|K}(\bf u)\bar u_h=\|\bar u_h\|^2 - \sum_{k=1}^\ell\,\sum_{(i,j)\in I_h\times I_k}\beta_{ij}\irn|u_i|^p|u_j|^p,
\end{align*}
and that \ $\cJ^\phi(\bf u)=\frac{p-1}{2p}\|\bf u\|^2$ if $u\in\cN^\phi$. \ Let
\begin{equation*}
c^{\bf\phi}:= \inf_{\bf u\in \cN^{\bf\phi}}\cJ^{\bf\phi}(\bf u).
\end{equation*} 
If $\bf s=(s_1,\ldots,s_q)\in\r^q$ and $\bf u=(\bar{u}_{1},\ldots,\bar{u}_{q})\in \cH^{\bf\phi}$ we write $\bf s\bf u:=(s_1\bar{u}_{1},\ldots,s_q\bar{u}_{q})$. The following facts were proved in \cite{cp}.

\begin{lemma} \label{lem:nehari}
\begin{itemize}
\item[$(i)$] $c^{\bf\phi} >0$.
\item[$(ii)$] If the coordinates of $\bf u\in \cH^{\bf\phi}$ satisfy
\begin{equation} \label{eq:N}
\sum_{k=1}^q\,\sum_{(i,j)\in I_h\times I_k}\irn\beta_{ij}|u_i|^p|u_j|^p>0\qquad\text{for every \ }h=1,\ldots,q,
\end{equation}
then there exists a unique $\bf s_{\bf u} \in (0,\infty)^{q}$ such that $\bf s_{\bf u}\bf u \in \cN^{\bf\phi}$. Furthermore,  
$$\cJ^{\bf\phi}(\bf s_{\bf u} \bf u) = \max_{{\bf s} \in (0,\infty)^q}\cJ^{\bf\phi}(\bf s \bf u).$$
\end{itemize}
\end{lemma}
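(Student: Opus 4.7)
For \textit{part (i)}, the plan is the standard Sobolev bound on the Nehari set. Summing the Nehari identities $\partial_{\bar u_h}\cJ^{\bf\phi}(\bf u)\bar u_h=0$ over $h=1,\dots,q$ for $\bf u\in\cN^{\bf\phi}$ gives
\[\|\bf u\|^2 = \sum_{i,j=1}^{\ell} \beta_{ij}\irn |u_i|^p|u_j|^p,\]
which, combined with $|u_i|^p|u_j|^p\leq \tfrac12(|u_i|^{2p}+|u_j|^{2p})$ and the continuous embedding $H^1(\rn)\hookrightarrow L^{2p}(\rn)$ (available because $1<p<2^*/2$ forces $2<2p<2^*$), yields $\|\bf u\|^2 \leq C\|\bf u\|^{2p}$ for a uniform $C$. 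Since $2p>2$, this forces $\|\bf u\|\geq c_0>0$ on $\cN^{\bf\phi}$, and the relation $\cJ^{\bf\phi}=\tfrac{p-1}{2p}\|\cdot\|^2$ on $\cN^{\bf\phi}$ gives $c^{\bf\phi}\geq\tfrac{p-1}{2p}c_0^2>0$.

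For \textit{part (ii)}, I would employ the fibering map
\[\psi(\bf s) := \cJ^{\bf\phi}(\bf s\bf u) = \tfrac12\sum_{h=1}^{q} s_h^2 A_h - \tfrac{1}{2p}\sum_{h,k=1}^{q} s_h^p s_k^p a_{hk}\]
on $[0,\infty)^q$, where $A_h:=\|\bar u_h\|^2$ and $a_{hk}:=\sum_{(i,j)\in I_h\times I_k}\beta_{ij}\irn|u_i|^p|u_j|^p$. By $(B_1)$, $a_{hk}\leq 0$ for $h\neq k$, while $a_{hh}\geq \sum_{i\in I_h}\beta_{ii}\irn|u_i|^{2p}>0$, so hypothesis \eqref{eq:N} reads $a_{hh}>\sum_{k\neq h}|a_{hk}|$. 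A direct differentiation shows that interior critical points of $\psi$ correspond precisely to scalings $\bf s\bf u\in\cN^{\bf\phi}$. Using AM-GM, $\sum_{h\neq k}t_h^p t_k^p|a_{hk}|\leq \sum_h t_h^{2p}\sum_{k\neq h}|a_{hk}|<\sum_h t_h^{2p}a_{hh}$, so the form $Q(\bf t):=\sum_{h,k}t_h^p t_k^p a_{hk}$ is strictly positive on the unit sphere of $[0,\infty)^q$, and bounded below by a positive constant by compactness. This gives $\psi(r\bf t)\to-\infty$ as $r\to\infty$ uniformly in $\bf t$; since $\psi(r\bf 1)>0$ for small $r$, $\sup\psi>0$. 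Expanding $\psi(\epsilon,\bf s_{\neq h})-\psi(0,\bf s_{\neq h}) = \tfrac{\epsilon^2}{2}A_h+\tfrac{\epsilon^p}{p}\sum_{k\neq h}s_k^p|a_{hk}|-O(\epsilon^{2p})$ shows the right-hand side is strictly positive for small $\epsilon>0$, so every boundary point is strictly dominated by nearby interior points. Thus the maximum of $\psi$ is attained at some interior critical point $\bf s_{\bf u}\in(0,\infty)^q$.

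The \textit{main obstacle} is uniqueness of $\bf s_{\bf u}$. My plan is to show that $D^2\psi$ is negative definite at every interior critical point. Using the Nehari relation $A_h=s_h^{p-2}\sum_k s_k^p a_{hk}$, the diagonal Hessian entries simplify to $\partial_h^2\psi=(2-p)A_h-ps_h^{2p-2}a_{hh}$, and substituting $s_h^{2p-2}a_{hh}=A_h + s_h^{p-2}\sum_{k\neq h}s_k^p|a_{hk}|\geq A_h$ yields $\partial_h^2\psi\leq(2-2p)A_h<0$. The off-diagonals $\partial_h\partial_k\psi=-ps_h^{p-1}s_k^{p-1}a_{hk}\geq 0$ are controlled against the diagonals by splitting an arbitrary test quadratic form via AM-GM and invoking the strict diagonal dominance from \eqref{eq:N}, which is the delicate step. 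With negative definiteness at every interior critical point, each such point is an isolated strict local maximum; combined with coercivity, a mountain-pass argument applied to $-\psi$ then rules out two distinct interior critical points (which would force a saddle of $\psi$ between them, contradicting the Hessian analysis). The characterization $\cJ^{\bf\phi}(\bf s_{\bf u}\bf u)=\max_{\bf s\in(0,\infty)^q}\cJ^{\bf\phi}(\bf s\bf u)$ is then built in, since $\bf s_{\bf u}$ is the unique critical point and realizes the maximum.
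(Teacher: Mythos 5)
The paper does not prove Lemma \ref{lem:nehari} at all --- it defers entirely to \cite[Lemma 2.2]{cp} and \cite[Lemma 2.2]{clapp2022} --- so the comparison here is between your self-contained argument and that citation. Your part $(i)$ and the existence half of part $(ii)$ are correct and standard: the fibering map $\psi$, the strict positivity of $Q$ on the closed orthant via AM--GM and the diagonal dominance $a_{hh}>\sum_{k\neq h}|a_{hk}|$ encoded in \eqref{eq:N}, the resulting coercivity, and the strict increase of $\psi$ when moving off a face of the orthant (which uses $a_{hk}\le 0$ for $h\ne k$ and $A_h>0$) together give an interior maximizer, which is a critical point of $\psi$ and hence a point of $\cN^{\bf\phi}$. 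The problems are concentrated in the uniqueness step, where there are two genuine gaps.

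First, the off-diagonal Hessian control as you describe it does not close. At an interior critical point you correctly get $\partial_h^2\psi\le (2-2p)A_h$ and $\partial_h\partial_k\psi=p\,s_h^{p-1}s_k^{p-1}|a_{hk}|$, but the dominance $a_{hh}>\sum_{k\ne h}|a_{hk}|$ coming from \eqref{eq:N} is a statement about the unweighted matrix $(a_{hk})$ at $\bf s=\bf 1$; after the plain AM--GM it would leave you needing $(2p-2)\,a_{hh}\ge p\sum_{k\ne h}|a_{hk}|$, which fails for $p$ close to $1$ (recall $p<2$ here). What does work is the \emph{weighted} dominance that the critical-point equation itself provides, namely $s_h^{2p-2}a_{hh}-s_h^{p-2}\sum_{k\ne h}s_k^p|a_{hk}|=A_h>0$, combined with the weighted AM--GM $s_h^{p-1}s_k^{p-1}|v_h||v_k|\le\tfrac12\big(s_h^{p-2}s_k^p v_h^2+s_k^{p-2}s_h^p v_k^2\big)$; this yields $\bf v^{T}D^2\psi(\bf s)\bf v\le -(2p-2)\sum_h s_h^{-2}A_h v_h^2<0$. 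So negative definiteness at every interior critical point is true, but not by the mechanism you invoke.

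Second, and more seriously, the mountain-pass step does not rule out two interior critical points. Since $1<p<2$, $\partial_h\psi(\bf s)=s_hA_h-s_h^{p-1}\sum_k s_k^p a_{hk}$ vanishes whenever $s_h=0$, so the boundary of the orthant carries critical points of $\psi$ (those of the subsystems), $\psi$ fails to be $C^2$ there, and your Hessian analysis says nothing about them. The mountain-pass point of $-\psi$ joining two interior maxima could therefore sit on the boundary, and no contradiction is reached without an extra deformation argument confining admissible paths to the open orthant. A complete and much shorter route avoids the Hessian entirely: if $\bf s$ and $\bf t$ are two interior critical points, set $\lambda:=\max_h s_h/t_h=s_{h_0}/t_{h_0}$; the $h_0$-th Nehari equation together with $a_{h_0k}\le 0$ for $k\ne h_0$ gives
\begin{align*}
A_{h_0}\lambda^{2-p}t_{h_0}^{2-p}=s_{h_0}^p a_{h_0h_0}-\sum_{k\ne h_0}s_k^p|a_{h_0k}|\;\ge\;\lambda^p\Big(t_{h_0}^p a_{h_0h_0}-\sum_{k\ne h_0}t_k^p|a_{h_0k}|\Big)=\lambda^p A_{h_0}t_{h_0}^{2-p},
\end{align*}
hence $\lambda^{2-2p}\ge 1$ and so $\lambda\le 1$, i.e.\ $s_h\le t_h$ for all $h$; exchanging the roles of $\bf s$ and $\bf t$ gives the reverse inequalities, so $\bf s=\bf t$. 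I would replace your uniqueness argument by this comparison.
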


\begin{proof}
See \cite[Lemma 2.2]{cp} or \cite[Lemma 2.2]{clapp2022}.
\end{proof}

\begin{lemma} \label{lem:existence}
If $c^{\bf\phi}$ is attained, then the system \eqref{eq:system} has a block-wise nontrivial solution $\bf u=(u_1,\ldots,u_\ell)\in \cH^{\bf\phi}$. Furthermore, if $u_i$ is nontrivial, then $u_i$ is positive if $\phi_i\equiv 1$ and $u_i$ is nonradial and changes sign if $\phi_i$ is surjective.
\end{lemma}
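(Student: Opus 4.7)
Suppose $\bf u\in\cN^{\bf\phi}$ attains $c^{\bf\phi}$. The plan is the standard Nehari-manifold argument: first show that $\bf u$ is a free critical point of $\cJ^{\bf\phi}$ on $\cH^{\bf\phi}$; then lift to the full space $(H^1(\rn))^\ell$ via Palais's principle of symmetric criticality and classical elliptic regularity to obtain a classical solution of \eqref{eq:system}; and finally read off the sign and symmetry of the components from the choice of equivariance $\bf\phi$. For the first step, fix $\bf v\in\cH^{\bf\phi}$. Since $\bf u$ satisfies \eqref{eq:N} with strict inequality (the $h$-th sum equals $\|\bar u_h\|^2>0$), the same is true of $\bf u+t\bf v$ for $|t|$ small, so Lemma \ref{lem:nehari}(ii) produces a unique $\bf s(t)\in(0,\infty)^q$ with $\bf s(t)(\bf u+t\bf v)\in\cN^{\bf\phi}$ and $\bf s(0)=(1,\ldots,1)$. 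The implicit function theorem, applied as usual on the generalized Nehari manifold using $p>1$, shows that $\bf s$ is of class $C^1$ near $0$. Minimality of $\bf u$ in $\cN^{\bf\phi}$ implies that $t\mapsto\cJ^{\bf\phi}(\bf s(t)(\bf u+t\bf v))$ has a minimum at $t=0$; differentiating and using the Nehari identities $\partial_{\bar u_h}\cJ^{\bf\phi}(\bf u)\bar u_h=0$ to cancel the $s_h'(0)$-terms produces $(\cJ^{\bf\phi})'(\bf u)\bf v=0$.

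The spaces $H^1(\rn)^{\phi_i}$ are the fixed-point sets of the isometric linear $G$-action $(g\cdot u)(x):=\phi_i(g)u(g^{-1}x)$, and $\cJ^{\bf\phi}$ is invariant under the resulting product action on $(H^1(\rn))^\ell$. Palais's principle of symmetric criticality then promotes criticality on $\cH^{\bf\phi}$ to criticality on $(H^1(\rn))^\ell$, so $\bf u$ is a weak solution of \eqref{eq:system}, and Lemma \ref{sourceN} makes it classical. Block-wise nontriviality is immediate from $\|\bar u_h\|\neq 0$ in the definition of $\cN^{\bf\phi}$.

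For the sign properties, if $\phi_h$ is surjective, pick $g\in G$ with $\phi_h(g)=-1$; then $u_i(gx)=-u_i(x)$ for every $i\in I_h$, which forces any nontrivial such $u_i$ to change sign, and rules out radiality (a radial $u_i$ would satisfy $u_i(gx)=u_i(x)$, hence $u_i\equiv 0$). If $\phi_h\equiv 1$, both $\cJ^{\bf\phi}$ and the defining identities of $\cN^{\bf\phi}$ depend on each $u_i$ with $i\in I_h$ only through $|u_i|$ (using $|\nabla|u_i||=|\nabla u_i|$ a.e.), so the tuple obtained by replacing each such $u_i$ by $|u_i|$ is again a minimizer in $\cN^{\bf\phi}$, and hence, by the previous two paragraphs, a classical solution whose $\phi_h\equiv 1$-components are nonnegative; the strong maximum principle then forces each nontrivial such component to be strictly positive. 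I expect the last SMP step to be the main technical nuisance, since for $p<2$ the factor $|u_i|^{p-2}$ in the nonlinearity is singular on the zero set of $u_i$; this is handled in the standard Schrödinger-system way by localizing to the open set $\{u_i>0\}$ and combining the classical SMP/Hopf lemma with a connectedness argument to propagate positivity.
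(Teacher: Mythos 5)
Your outline coincides with the paper's: the paper disposes of the first assertion in one line by citing \cite[Lemma 2.4]{cp} (``any minimizer of $\cJ^{\bf\phi}$ on $\cN^{\bf\phi}$ is a block-wise nontrivial solution''), and its treatment of signs and radiality is exactly yours ($\phi_i$ surjective forces sign change and nonradiality; for $\phi_i\equiv 1$ one replaces $u_i$ by $|u_i|$, which is again a $G$-invariant minimizer). Where you expand the citation into an actual argument, however, there is a gap in the implicit function theorem step. To solve $\partial_{s_h}\cJ^{\bf\phi}\big(\bf s(\bf u+t\bf v)\big)=0$, $h=1,\dots,q$, for $\bf s(t)$ near $(\bf 1,0)$, you need the Hessian of $\bf s\mapsto\cJ^{\bf\phi}(\bf s\bf u)$ to be invertible at its maximum point. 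Lemma \ref{lem:nehari}$(ii)$ gives only existence, uniqueness and maximality of $\bf s_{\bf u}$, i.e.\ negative \emph{semi}definiteness. For $q\geq 2$ the off-block terms $-\frac{1}{2p}\beta_{ij}s_h^ps_k^p\int|u_i|^p|u_j|^p$ with $\beta_{ij}<0$ have indefinite Hessian when $1<p<2$, so $\bf s\mapsto\cJ^{\bf\phi}(\bf s\bf u)$ is not concave in any obvious set of variables and nondegeneracy of the maximum is not free. This is precisely why \cite[Lemma 2.4]{cp} argues via the quantitative deformation lemma on the (merely closed) Nehari set rather than via a Lagrange-multiplier/IFT route; you would need either to reproduce that argument or to prove the nondegeneracy.

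On positivity, you correctly identify the real difficulty: for a nonnegative component the competitive cross terms yield only $-\Delta u_i+u_i+Cu_i^{p-1}\geq 0$, a differential inequality with sublinear absorption for which the V\'azquez integral criterion fails when $p<2$, so dead cores cannot be excluded by the inequality alone. But the fix you sketch does not address this: $\{u_i>0\}$ is open and positivity there is tautological; the issue is to show its complement has empty interior, which is exactly what the classical SMP/Hopf lemma cannot deliver here because the zero-order coefficient $1+Cu_i^{p-2}$ blows up on the zero set. A connectedness argument only closes once one has a genuine local statement of the form ``$u_i(x_0)=0$ implies $u_i\equiv 0$ near $x_0$,'' and that is the step that is missing. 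To be fair, the paper's own proof glosses over this point as well (it simply asserts that replacing $u_i$ with $|u_i|$ yields ``a solution with the required properties''), so this is a gap you share with the source rather than one you introduced; still, your write-up should not present the localization idea as if it resolved the issue.
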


\begin{proof}
It is shown in \cite[Lemma 2.4]{cp} that any minimizer of $\cJ^{\bf\phi}$ on $\cN^{\bf\phi}$ is a block-wise nontrivial solution to \eqref{eq:system}. If $u_i\neq 0$ and $\phi_i$ is surjective, then $u_i$ is nonradial and changes sign. If $\phi_i\equiv 1$ then $|u_i|$ is $G$-invariant and replacing $u_i$ with $|u_i|$ we obtain a solution with the required properties.
\end{proof}

Set $Q:=\{1,\ldots,q\}$ and fix a decomposition $Q=Q^+\cup Q^-$ with $Q^+\cap Q^-=\emptyset$. From now on, we consider the following symmetries. We write $\rn\equiv\cc\times\cc\times\r^{N-4}$ and a point in $\rn$ as $(z_1,z_2,y)\in\cc\times\cc\times\r^{N-4}$. 

\begin{definitions} \label{definition}
Let $\mathrm{i}$ denote the imaginary unit.  For each $m\in\n$, let
\begin{align*}
K_m:=\{\mathrm{e}^{2\pi\mathrm{i}j/m}:j=0,\ldots,m-1\},
\end{align*}
$G_m$ be the group generated by $K_m\cup\{\tau\}\cup O(N-4)$, acting on each point $(z_1,z_2,y)\in\cc\times\cc\times\r^{N-4}$ as
\begin{align*}
\mathrm{e}^{2\pi\mathrm{i}j/m}(z_1,z_2,y)&:=(\mathrm{e}^{2\pi\mathrm{i}j/m}z_1,\mathrm{e}^{2\pi\mathrm{i}j/m}z_2,y), \qquad\qquad\tau(z_{1},z_{2},y):=(z_{2},z_{1},y),\\
\alpha(z_1,z_2,y)&:=(z_1,z_2,\alpha y)\quad\text{if \ }\alpha\in O(N-4),
\end{align*}
and $\theta:G_m\to\z_2$ be the homomorphism satisfying 
\begin{align*}
\theta(\mathrm{e}^{2\pi\mathrm{i}j/m})=1,\quad \theta(\tau)=-1,\quad \text{ and }\quad \theta(\alpha)=1\quad \text{ for every $\alpha\in O(N-4)$.}    
\end{align*}
Define $\phi_h:G_m\to\z_2$ by
\begin{equation} \label{eq:phi}
\phi_h:=
\begin{cases}
1 &\text{if \ }h\in Q^+,\\
\theta &\text{if \ }h\in Q^-.
\end{cases}
\end{equation}
\end{definitions}

Due to the lack of compactness, $c^{\bf\phi}$ is not always attained; see e.g. \cite[Corollary 2.8$(i)$]{clapp2022}. A sufficient condition for this to happen is given by the next lemma. We use the following notation. If $Q'\subset Q:=\{1,\ldots,q\}$ we consider the subsystem of \eqref{eq:system} obtained by deleting all components of $\bar{u}_h$ for every $h\notin Q'$, and we denote by $\cJ^{\bf\phi}_{Q'}$ and $\cN^{\bf\phi}_{Q'}$ the functional and the Nehari set associated to this subsystem. We write
\begin{align*}
 c_{Q'}^{\bf\phi}:= \inf_{\bf u\in \cN_{Q'}^{\bf\phi}}\cJ_{Q'}^{\bf\phi}(\bf u).
\end{align*}
If $Q'=\{h\}$ we omit the curly brackets and write, for instance, $c_{h}^{\bf\phi}$ or $\cJ^{\bf\phi}_{h}.$

\begin{lemma}[\textbf{Compactness}] \label{lem:compactness}
Let $N\neq 5$, $m\geq 5$ and $\phi_h:G_m\to\z_2$ be as in \eqref{eq:phi}. If, for each $h\in Q:=\{1,\ldots,q\}$, the strict inequality
\begin{equation} \label{eq:compactness}
c^{\bf\phi}<
\begin{cases}
c^{\bf\phi}_{Q\smallsetminus\{h\}}+m\mu_h\frac{p-1}{2p}\|\omega\|^2, &\text{if \ }h\in Q^+,\\ \smallskip
c^{\bf\phi}_{Q\smallsetminus\{h\}}+2m\mu_h\frac{p-1}{2p}\|\omega\|^2, &\text{if \ }h\in Q^-,
\end{cases}
\end{equation}
holds true, then $c^{\bf\phi}$ is attained, where $\omega$ is the positive radial solution to \eqref{eq:omega} and $\mu_h$ is given by \eqref{eq:mu_h}.
\end{lemma}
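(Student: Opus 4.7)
The plan is a contradiction-type concentration--compactness argument: I take a minimizing sequence for $c^{\bf\phi}$, identify any mass escaping to infinity, and show that the strict inequality \eqref{eq:compactness} forbids such escape, forcing attainment. Concretely, I would begin with $(\bf u_n)\subset\cN^{\bf\phi}$ satisfying $\cJ^{\bf\phi}(\bf u_n)\to c^{\bf\phi}$. Since $\cJ^{\bf\phi}(\bf u)=\frac{p-1}{2p}\|\bf u\|^2$ on $\cN^{\bf\phi}$ and $c^{\bf\phi}>0$ by Lemma \ref{lem:nehari}$(i)$, the sequence is bounded in $\cH^{\bf\phi}$ and, up to a subsequence, $\bf u_n\rightharpoonup\bf u_\infty$. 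The Brezis--Lieb splitting
\begin{align*}
\cJ^{\bf\phi}(\bf u_n)=\cJ^{\bf\phi}(\bf u_\infty)+\cJ^{\bf\phi}(\bf u_n-\bf u_\infty)+o(1)
\end{align*}
then reduces the problem to analysing the defect $\bf u_n-\bf u_\infty\rightharpoonup 0$.

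Next I would split $Q=Q^{\text{in}}\sqcup Q^{\text{out}}$ according to whether $\bar u_{\infty,h}\neq 0$ or $\bar u_{\infty,h}=0$. For $h\in Q^{\text{in}}$, Lemma \ref{lem:nehari}$(ii)$ projects the $Q^{\text{in}}$-part of $\bf u_\infty$ onto $\cN^{\bf\phi}_{Q^{\text{in}}}$ via a positive scaling, and combined with weak lower semicontinuity this gives $\cJ^{\bf\phi}(\bf u_\infty)\geq c^{\bf\phi}_{Q^{\text{in}}}$. When $Q^{\text{out}}=\emptyset$ the infimum is attained and we are done. Otherwise, fix $h_0\in Q^{\text{out}}$ and apply a $G_m$-equivariant profile decomposition to $\bar u_{n,h_0}$: there is at least one nonzero profile $\bar w_{h_0}$ concentrated at points $y_n\in\rn$ with $|y_n|\to\infty$, and the $\bf\phi$-equivariance forces the full $G_m$-orbit of the translated profile to appear in $\bar u_{n,h_0}$. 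For $m\geq 5$ the rotations $K_m$ act freely on the $(z_1,z_2)\in\cc\times\cc$ factor at generic escaping points, producing $m$ pairwise distant copies; the $O(N-4)$-factor (which is why we need $N\neq 5$, so that $\r^{N-4}$ is either absent or at least two-dimensional) preserves these copies. When $h_0\in Q^-$, the sign-reversing $\tau$-swap produces an extra copy of opposite sign for each, doubling the count to $2m$. Each profile solves the autonomous limit subsystem on block $h_0$ with $V\equiv 1$, whose minimal Nehari energy equals $\mu_{h_0}\frac{p-1}{2p}\|\omega\|^2$ by \eqref{eq:mu_h}--\eqref{eq:omega}.

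This is precisely the moment where Theorem \ref{mainresult} enters: the exponential decay of the profile $\bar w_{h_0}$ (and hence of every translate in its orbit) makes the nonlinear cross-terms between distinct orbit members $o(1)$, so $\|\cdot\|^2$ and $\cJ^{\bf\phi}$ become additive across the orbit to leading order. Summing the contributions yields
\begin{align*}
c^{\bf\phi} \ \geq \ c^{\bf\phi}_{Q\smallsetminus\{h_0\}} \ + \ k_{h_0}\,\mu_{h_0}\,\tfrac{p-1}{2p}\,\|\omega\|^2,
\end{align*}
with $k_{h_0}=m$ if $h_0\in Q^+$ and $k_{h_0}=2m$ if $h_0\in Q^-$, contradicting \eqref{eq:compactness}. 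The main obstacle is not any single step but their interplay: the symmetric profile decomposition must be sharp enough to track the orbit geometry, and the additivity of energies across the orbit rests on quantitative spatial decay. Merely knowing $\bar w_{h_0}\in H^1(\rn)$, or even a power-type decay, would leave the summed cross-terms potentially large; exponential decay, which is what Theorem \ref{mainresult} buys us, is what closes this gap cleanly and supplies the "copy factor" $m$ or $2m$ in the final energy bound.
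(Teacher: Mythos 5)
The paper does not prove this lemma from scratch: it is obtained by citing \cite[Corollary 2.8$(ii)$ and Equation (5.1)]{clapp2022}, so your task was effectively to reconstruct the argument of that reference. Your outline has the right overall shape (a symmetric concentration--compactness analysis in which escaping mass of a block $h_0$ must carry a full $G_m$-orbit of profiles, $m$ of them if $h_0\in Q^+$ and $2m$ if $h_0\in Q^-$, each costing at least the single-block ground-state energy $\mu_{h_0}\frac{p-1}{2p}\|\omega\|^2$), and your explanation of the role of $N\neq 5$ is essentially correct.

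However, there is one genuine conceptual error: Theorem \ref{mainresult} plays no role in the proof of this lemma, and your claim that without exponential decay ``the summed cross-terms would be potentially large'' is not right. The cross-terms between profiles whose centers separate at infinite distance vanish by a soft argument — for fixed $u,v\in L^{2p}(\rn)$ one has $\irn|u(\cdot-y_n)|^p|v(\cdot-z_n)|^p\to 0$ whenever $|y_n-z_n|\to\infty$, by density of compactly supported functions — so additivity of the energy across the orbit needs no decay rate whatsoever, and indeed the compactness result in \cite{clapp2022} predates and is independent of Theorem \ref{mainresult}. Where exponential decay is genuinely indispensable is in the proof of Theorem \ref{thm:main2}, when one \emph{verifies} the hypothesis \eqref{eq:compactness}: there the interaction between the fixed minimizer $\bf w$ of the subsystem and the translated bumps must be bounded \emph{quantitatively} by $C\e^{-Rp}$ so as to be beaten by the gain $-C_0\e^{-Rd_m}$, and a power-type bound would not suffice. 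Importing Theorem \ref{mainresult} into the proof of the conditional compactness lemma both misplaces the novelty of the paper and, as written, is unjustified (your profiles are not a priori known to solve a system to which Theorem \ref{mainresult} applies until after further work). Two smaller gaps you should be aware of: if more than one block escapes, passing from $c^{\bf\phi}_{Q^{\mathrm{in}}}$ to $c^{\bf\phi}_{Q\smallsetminus\{h_0\}}$ requires a subadditivity step; and the weak limit of a minimizing sequence need not lie in $\cN^{\bf\phi}$ unless the sequence is first upgraded to a Palais--Smale sequence (Ekeland), so the inequality $\cJ^{\bf\phi}(\bf u_\infty)\geq c^{\bf\phi}_{Q^{\mathrm{in}}}$ needs justification beyond the scaling in Lemma \ref{lem:nehari}$(ii)$.
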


\begin{proof}
This statement follows by combining \cite[Corollary 2.8$(ii)$]{clapp2022} with \cite[Equation (5.1)]{clapp2022}.
\end{proof}

To verify condition \eqref{eq:compactness} we introduce a suitable test function. Fix $m\geq 5$ and let $K_m$ be as in Definitions \ref{definition}. If $h\in Q^+$, we take $\zeta_h:=(\frac{1}{\sqrt{2}},\frac{1}{\sqrt{2}},0)$ and, for each $R>1$, we define
$$\widehat\sigma_{hR}(x):=\sum_{g\in K_m}\omega(x-Rg\zeta_h),\qquad x\in\rn.$$
If $h\in Q^-$ we take $\zeta_h:=(1,0,0)$ and we define
$$\widehat\sigma_{hR}(x):=\sum_{g\in G_m'}\phi_h(g)\,\omega(x-Rg\zeta_h),\qquad x\in\rn,$$
where $\omega$ is the positive radial solution to \eqref{eq:omega} and $G'_m$ is the subgroup of $G_m$ generated by $K_m\cup\{\tau\}$. Note that $\widehat\sigma_{hR}(gx)=\phi_h(g)\widehat\sigma_{hR}(x)$ for every $g\in G_m$, $x\in\rn$. Let
\begin{equation} \label{eq:sigma}
\sigma_{hR}:=t_{hR}\widehat\sigma_{hR},
\end{equation}
where  $t_{hR}>0$ is chosen so that $\|\sigma_{hR}\|^2=\irn|\sigma_{hR}|^{2p}$.

\begin{lemma} \label{lem:test}
If $m\geq 5$, then, for each $h\in\{1,\ldots,q\}$, there exist $\bar t_{h}=(t_{\ell_{h-1}+1},\ldots,t_{\ell_h})\in(0,\infty)^{\ell_h-\ell_{h-1}}$ and $C_0,R_0>0$ such that $\bar t_{h}\sigma_{hR}:=(t_{\ell_{h-1}+1}\sigma_{hR},\ldots,t_{\ell_h}\sigma_{hR})\in\cN^{\bf\phi}_h$ and 
$$\cJ^{\bf\phi}_{h}(\bar t_{h}\sigma_{hR})\leq |G_m\zeta_h|\,\mu_h\tfrac{p-1}{2p}\|\omega\|^2-C_0\mathrm{e}^{-Rd_m}\qquad\text{for every \ }R\geq R_0,$$
where $|G_m\zeta_h|$ is the cardinality of the $G_m$-orbit of $\zeta_h$, i.e., $|G_m\zeta_h|=m$ if $h\in Q^+$ and $|G_m\zeta_h|=2m$ if $h\in Q^-$, and
\begin{align}\label{dm}
d_m:=|1-\mathrm{e}^{2\pi\mathrm{i}/m}|.
\end{align}
\end{lemma}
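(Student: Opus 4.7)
The plan is to reduce the computation of $\cJ_h^{\bf\phi}(\bar t_h\sigma_{hR})$ to a scalar comparison between $\|\sigma_{hR}\|^2$ and $|G_m\zeta_h|\|\omega\|^2$ via the definition of $\mu_h$, and to extract the exponentially small gain by analysing the interaction between the translates of $\omega$ that constitute $\widehat\sigma_{hR}$, using the precise decay $\omega(x)\sim c|x|^{-(N-1)/2}\e^{-|x|}$ guaranteed by Theorem \ref{mainresult}. By $(B_1)$, $\beta_{ij}\geq 0$ and $\beta_{ii}>0$ on each block $I_h$, so the quotient in \eqref{eq:mu_h} attains its infimum at some $\bar s_0\in\r^{I_h}$ with $s_{0,i}\geq 0$ and $\sum_{i\in I_h}s_{0,i}^2=1$, giving $\sum_{i,j\in I_h}\beta_{ij}|s_{0,i}|^p|s_{0,j}|^p=\mu_h^{-(p-1)}$. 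Taking $\bar t_h=\lambda\bar s_0$ and imposing $\bar t_h\sigma_{hR}\in \cN_h^{\bf\phi}$, together with the normalisation $\|\sigma_{hR}\|^2=\irn|\sigma_{hR}|^{2p}$, forces $\lambda^{2p-2}=\mu_h^{p-1}$, so $\sum_{i\in I_h}t_i^2=\mu_h$ and
\[\cJ_h^{\bf\phi}(\bar t_h\sigma_{hR})=\tfrac{p-1}{2p}\|\bar t_h\sigma_{hR}\|^2=\tfrac{p-1}{2p}\mu_h\|\sigma_{hR}\|^2.\]
Hence it suffices to prove that $\|\sigma_{hR}\|^2\leq |G_m\zeta_h|\|\omega\|^2-c\,\e^{-Rd_m}$ for some $c>0$ and all $R$ large.

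For this, enumerate the relevant orbit as $\{g_a\zeta_h\}_{a=1}^M$ with $M=|G_m\zeta_h|$ and signs $\epsilon_a:=\phi_h(g_a)\in\{\pm 1\}$, and set $\omega_a(x):=\omega(x-Rg_a\zeta_h)$. Since $\omega>0$ solves \eqref{eq:omega},
\[\|\widehat\sigma_{hR}\|^2=M\|\omega\|^2+I_R,\qquad I_R:=\sum_{a\neq b}\epsilon_a\epsilon_b\irn\omega_a^{2p-1}\omega_b\,\d x,\]
and a pointwise Taylor expansion of $t\mapsto|t|^{2p}$ around the dominant bump near each centre $Rg_a\zeta_h$ yields $\irn|\widehat\sigma_{hR}|^{2p}=M\|\omega\|^2+2pI_R+o(I_R)$ as $R\to\infty$. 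The classical interaction asymptotic $\irn\omega^{2p-1}(x)\omega(x-y)\,\d x\sim \gamma|y|^{-(N-1)/2}\e^{-|y|}$ for $|y|\to\infty$, together with the geometric fact that for $m\geq 5$ one has $d_m<\sqrt 2$ (so that nearest-neighbour centres in both the $Q^+$ and $Q^-$ cases lie at distance $Rd_m$ within a single $K_m$-sub-orbit on which $\phi_h$ is constant), then gives $I_R\sim c_0(Rd_m)^{-(N-1)/2}\e^{-Rd_m}>0$, with contributions from farther pairs of strictly smaller exponential order. Since $t_{hR}$ is chosen so that $\|\sigma_{hR}\|^2=\irn|\sigma_{hR}|^{2p}$, one has $\|\sigma_{hR}\|^2=(\|\widehat\sigma_{hR}\|^2)^{p/(p-1)}/(\irn|\widehat\sigma_{hR}|^{2p})^{1/(p-1)}$, and a first-order expansion yields
\[\|\sigma_{hR}\|^2=M\|\omega\|^2-\tfrac{p}{p-1}I_R+o(I_R),\]
from which the claim follows.

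The main obstacle is justifying the pointwise expansion $|\sum_a\epsilon_a\omega_a|^{2p}=\sum_a\omega_a^{2p}+2p\sum_{a\neq b}\epsilon_a\epsilon_b\omega_a^{2p-1}\omega_b+\text{remainder}$ with a remainder that is genuinely smaller than $I_R$ after integration. For $1<p<2$ the map $t\mapsto|t|^{2p}$ is only of class $\cC^{1,2p-2}$, so the Taylor expansion has to be carried out in the region $B(Rg_a\zeta_h,Rd_m/2)$ where a single bump $\omega_a$ dominates (where the error is of order $\omega_a^{2p-2}(\sum_{b\neq a}\omega_b)^2\ll\omega_a^{2p-1}\omega_b$ and, after integration, of order $\e^{-pRd_m}=o(\e^{-Rd_m})$) and complemented by an elementary pointwise bound in the overlap zones between bumps. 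On the combinatorial side, one must verify that every nearest-neighbour pair carries the sign $\epsilon_a\epsilon_b=+1$, which rests on $d_m<\sqrt 2$ for $m\geq 5$ separating the two $K_m$-sub-orbits $K_m(1,0,0)$ and $K_m(0,1,0)$ in the $Q^-$ case.
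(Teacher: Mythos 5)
Your proposal follows the same route as the paper's own (very terse) proof: the paper chooses $\bar t_h$ exactly as you do, i.e.\ so that $\sum_{i\in I_h}t_i^2=\sum_{i,j\in I_h}\beta_{ij}t_i^pt_j^p=\mu_h$, and then delegates the whole interaction estimate to \cite[Proposition 4.1$(i)$ and Lemma 4.4]{clapp2022}; you reconstruct that estimate from scratch. Your reduction $\cJ^{\bf\phi}_h(\bar t_h\sigma_{hR})=\tfrac{p-1}{2p}\mu_h\|\sigma_{hR}\|^2$, the exact identity $\|\widehat\sigma_{hR}\|^2=M\|\omega\|^2+I_R$ (via the equation for $\omega$), the formula $\|\sigma_{hR}\|^2=A_R^{p/(p-1)}B_R^{-1/(p-1)}$, and the sign analysis of nearest-neighbour pairs (using $d_m=2\sin\tfrac{\pi}{m}<\sqrt2$ for $m\geq 5$, so that the negatively signed cross terms in the $Q^-$ case sit at distance $\sqrt2\,R>Rd_m$) are all correct.

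Three points need attention. First, your own asymptotic $I_R\sim c_0(Rd_m)^{-(N-1)/2}\e^{-Rd_m}$ (which is the right one, since $\omega(r)\sim c\,r^{-(N-1)/2}\e^{-r}$) yields only $\cJ^{\bf\phi}_h(\bar t_h\sigma_{hR})\leq M\mu_h\tfrac{p-1}{2p}\|\omega\|^2-C_0R^{-(N-1)/2}\e^{-Rd_m}$, so ``from which the claim follows'' overshoots: a bound $-C_0\e^{-Rd_m}$ with fixed $C_0>0$ is not attainable for $N\geq 2$. You should keep the polynomial factor, or replace $d_m$ by $d_m+\eps$; either version still suffices in the proof of Theorem~\ref{thm:main2}, where the gain only has to dominate $C\e^{-Rp}$ with $p>d_m$ strictly. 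Second, you only get $s_{0,i}\geq 0$ for the minimizer of \eqref{eq:mu_h}, whereas the lemma requires $\bar t_h\in(0,\infty)^{\ell_h-\ell_{h-1}}$; under $(B_1)$ alone the infimum can be attained on the boundary (e.g.\ a block with $\beta_{ij}=0$ for $i\neq j$, where convexity of $t\mapsto t^p$ pushes the maximizer of $\sum\beta_{ij}|s_i|^p|s_j|^p$ to a coordinate axis), so interior attainment has to be justified (this is where the connectivity in $(B_2)$, or the corresponding statement in \cite{clapp2022}, enters). Third, for $1<p<\tfrac32$ the remainder $\omega_a^{2p-2}\big(\sum_{b\neq a}\omega_b\big)^2$ does not integrate to $O(\e^{-pRd_m})$ by the naive convolution bound; on the region where $\omega_a$ dominates one should instead peel off $\omega_b^{2-s}\leq C\e^{-(2-s)Rd_m/2}$ and integrate $\omega_a^{2p-2}\omega_b^{s}$ for small $s>0$ to get $o(\e^{-Rd_m})$. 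These are fixable technicalities rather than wrong ideas, but as written the quantitative conclusion is not fully established.
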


\begin{proof}
Take $\bar t_h=(t_{\ell_{h-1}+1},\ldots,t_{\ell_h})\in(0,\infty)^{\ell_h-\ell_{h-1}}$ such that 
$$\sum_{i\in I_h}t_i^2=\sum_{i,j\in I_h}\beta_{ij}t_j^pt_i^p=\mu_h$$
and apply \cite[Proposition 4.1$(i)$ and Lemma 4.4]{clapp2022}.
\end{proof}
\smallskip

\begin{proof}[Proof of Theorem \ref{thm:main2}]
Assume $(B_1)$ and let $\phi_h:G_m\to\z_2$ be given by \eqref{eq:phi}. For $q=1$ and $m\geq 5$ it is proved in \cite[Corollary 4.2 and Proposition 4.5]{clapp2022} that $c^{\bf\phi}$ is attained at $\bf u\in\cN^{\bf\phi}$ satisfying
$$\|\bf u\|^2=\mu_1\|\omega\|^2\text{ \ if \ }Q^+=\{1\}\qquad\text{and}\qquad\|\bf u\|^2<2m\,\mu_1\|\omega\|^2\text{ \ if \ }Q^-=\{1\}.$$
Taking $m=5$ gives statement $(b)$.

Fix $m=6$. We claim that $c^{\bf\phi}$ is attained and that the estimate $(c)$ holds true for every $q\geq 2$. To prove this claim, we proceed by induction. Assume it is true for $q-1$ with $q\geq 2$. 

We will show that the compactness condition \eqref{eq:compactness} holds true.  Using a change of coordinates, it suffices to argue for $h=q$. By induction hypothesis there exists $\bf w=(\bar w_1,\ldots,\bar w_{q-1})\in\cN^{\bf\phi}_{Q\smallsetminus\{q\}}$ such that $\cJ^{\bf\phi}_{Q\smallsetminus\{q\}}(\bf w)=c^{\bf\phi}_{Q\smallsetminus\{q\}}$. For each $R>1$ let $\sigma_{qR}$ be as in \eqref{eq:sigma} and take $\bar t_q\in(0,\infty)^{\ell-\ell_{q-1}}$ as in Lemma \ref{lem:test}. Set $\bar w_{hR}=\bar w_h$ for $h=1,\ldots,q-1$ and $\bar w_{qR}=\bar t_q\sigma_{qR}$, and define $\bf w_R=(w_{1R},\ldots,w_{\ell R}):=(\bar w_{1R},\ldots,\bar w_{qR})$.  Then, as $\bf w\in\cN^{\bf\phi}_{Q\smallsetminus\{q\}}$ and the interaction between the components of $\bf w$ and $\sigma_{qR}$ tends to $0$ as $R\to\infty$, we have that $\bf w_R$ satisfies \eqref{eq:N} for large enough $R$ and, as a consequence, there exist $R_1>0$ and $(s_{1R},\ldots,s_{qR})\in[{1}/{2},2]^q$ such that $(s_{1R}\bar w_{1R},\ldots,s_{qR}\bar w_{qR})\in\cN^{\bf\phi}$ if $R\geq R_1$. Set $\bf u_R=(u_{1R},\ldots,u_{\ell R}):=(s_{1R}\bar w_{1R},\ldots,s_{qR}\bar w_{qR})$. Using that $\bf w\in\cN^{\bf\phi}_{Q\smallsetminus\{q\}}$ and $\bar t_q\sigma_{qR}\in\cN_q^{\bf\phi}$, from the last statement in Lemma \ref{lem:nehari}$(ii)$ and Lemma \ref{lem:test} we derive
\begin{align*}
\cJ^{\bf\phi}(\bf u_R) &= \frac{1}{2}\sum_{i=1}^\ell\|u_{iR}\|^2 - \frac{1}{2p}\sum_{i,j=1}^\ell\beta_{ij}\irn |u_{iR}|^p|u_{jR}|^p \\
&\leq\cJ^{\bf\phi}_{Q\smallsetminus\{q\}}(\bf w)+\cJ_q^{\bf\phi}(\bar t_q\sigma_{qR}) - \frac{1}{p}\sum_{h=1}^{q-1}\,\sum_{(i,j)\in I_h\times I_q}\beta_{ij}\irn |s_{hR}w_{iR}|^p|s_{qR}w_{jR}|^p \\
&\leq c^{\bf\phi}_{Q\smallsetminus\{q\}} + |G_m\zeta_h|\,\mu_q\tfrac{p-1}{2p}\|\omega\|^2 - C_0\mathrm{e}^{-Rd_m} + C_1 \sum_{h=1}^{q-1}\,\sum_{i\in I_h}\irn |w_{iR}|^p|\sigma_{qR}|^p,
\end{align*}
if $R\geq\max\{R_0,R_1\}$, where $C_0$ and $C_1$ are positive constants and $d_m$ is given in \eqref{dm}.

It is well known that  $|\omega(x)|\leq C\mathrm{e}^{-|x|}$ and, as $\bf w$ solves a subsystem of \eqref{eq:system}, Theorem \ref{mainresult} asserts that
\begin{equation*}
|w_{iR}(x)|\leq C\mathrm{e}^{-|x|}\quad\text{ \ for every \ }i\in I_h \text{ \ with \ }h=1,\ldots, q-1.
\end{equation*}
Therefore, for every $g\in G_m$,
$$\irn |w_{iR}|^p|\omega(\,\cdot\,-Rg\zeta_h)|^p \leq C\irn \mathrm{e}^{-p|x|}\,\mathrm{e}^{-p|x-Rg\zeta_h|}\d x \leq C\mathrm{e}^{-Rp}.$$
So, if $p>d_m$, we conclude that
$$c^{\bf\phi}< c^{\bf\phi}_{Q\smallsetminus\{q\}} + |G_m\zeta_h|\,\mu_q\tfrac{p-1}{2p}\|\omega\|^2$$
and, by Lemmas \ref{lem:compactness} and \ref{lem:existence}, $c^{\bf\phi}$ is attained at a block-wise nontrivial solution $\bf u$ of \eqref{eq:system} such that every component of $\bar u_h$ is positive if $h\in Q^+$ and every component of $\bar u_h$ is nonradial and changes sign if $h\in Q^-$. Furthermore, since we are assuming $(B_2)$ and $(B_3)$ with $C_*$ as in \eqref{C} below, \cite[Theorem 3.3]{clapp2022} asserts that $\bf u$ is fully nontrivial.

Finally, note that $p>1=d_m$ because $m=6$. As $|G_m\zeta_h|=6$ if $h\in Q^+$ and $|G_m\zeta_h|=12$ if $h\in Q^-$, the estimate in statement $(c)$ follows by induction.
\end{proof}

\begin{remark}
\emph{If $m=5$ and $p>d_m$ we arrive to a similar conclusion, where, in this case, the constant $b_h$ in statement $(b)$ is $5$ if $h\in Q^+$ and it is $10$ if $h\in Q^-$. Note, however, that numbers $p$ satisfying $d_5=2\sin\frac{\pi}{5}<p<\frac{N}{N-2}$ exist only for $N\leq 13$.}
\end{remark}

\begin{remark}\label{C:rmk}
\emph{
For $\phi_h$ as in \eqref{eq:phi}, the constant $C_*>0$ appearing in $(B_3)$ depends on $N$, $p$, $q$, and $Q^+$. It is explicitly defined in \cite[Equation (3.1)]{clapp2022} as
\begin{align}\label{C}
C_*:=\left(\frac{pd_{\bf\phi}}{(p-1)S_{\bf\phi}^{\frac{p}{p-1}}}\right)^p,
\end{align}
where
\begin{align*}
d_\phi:=\frac{p-1}{2p}\inf_{(v_1,\ldots,v_q)\in \cU^{\bf \phi}}\sum_{h=1}^q\|v_h\|^2
\end{align*}
with \ $\cU^{\bf\phi}:=\{(v_1,\ldots,v_q)\::\: v_h\in H^1(\R^N)^{\phi_h}\backslash \{0\},\ \|v_h\|^2=|v_h|_{2p}^{2p},\ v_hv_k=0\ \text{ if }h\neq k\}$, \ and 
\begin{align*}
S_{\bf\phi}:=\min_{h=1,\ldots,q}\,\inf_{v\in H^1(\R^N)^{\phi_h}\backslash \{0\}}\frac{\|v\|^2}{|v|_{2p}^2}.
\end{align*}
}
\end{remark}

\begin{remark}
\emph{In the proof of Theorem \ref{thm:main2} we use \cite[Theorem 2.3]{ackermann2016}, which also characterizes the \emph{sharp} decay rate for \emph{positive} components by providing a bound from below.  This kind of information can be useful to show uniqueness of positive solutions for some problems, see \cite[Section 8.2]{BFSST18}.}
\end{remark}

To conclude, we discuss some special cases.

\begin{examples}
Assume $(B_1)$ and let $p\in(1,\frac{2^*}{2})$.
\begin{itemize}
\item[$(a)$] If $q=1$ the system \eqref{eq:system} is cooperative and more can be said. Indeed, it is shown in \emph{\cite[Corollary 4.2 and Proposition 4.5]{clapp2022}} that, if $(B_2)$ is satisfied, then \eqref{eq:system} has a synchronized solution $\bf u=(t_1u,\ldots,t_\ell u)$, where $(t_1,\ldots,t_\ell)\in(0,\infty)^\ell$ is a minimizer for \eqref{eq:mu_h} and $u$ is a nontrivial $\phi$-equivariant least energy solution of the equation
\begin{align}\label{O}
-\Delta u+u=|u|^{2p-2}u,\qquad u\in H^1(\rn)^\phi.    
\end{align}
Here, if $Q^+=\{1\}$, then $\phi\equiv 1$ (and therefore $u=\omega$) and $\|\bf u\|^2\leq \mu_1\|\omega\|^2$.  On the other hand, if  $Q^-=\{1\}$, then $\phi:G_m\to\z_2$ is the homomorphism $\theta$ given in \emph{Definitions~\ref{definition}} and $\|\bf u\|^2\leq 10\mu_1\|\omega\|^2$.

\item[$(b)$] If $q=\ell\geq 2$ the system \eqref{eq:system} is competitive, i.e., $\beta_{ii}>0$ and $\beta_{ij}<0$ if $i\neq j$. Assumptions $(B_2)$ and $(B_3)$ are automatically satisfied and, as $\mu_i=\beta_{ii}^{-\frac{1}{p-1}}$, the estimate in \emph{Theorem \ref{thm:main2}$(c)$} becomes
\begin{align*}
\|\bf u\|^2&< \left(\min_{j\in Q}\Big(a_j\beta_{jj}^{-\frac{1}{p-1}}+\sum_{i\in Q\smallsetminus\{i\}}b_i\beta_{ii}^{-\frac{1}{p-1}}\Big)\right)\|\omega\|^2\\
&\leq
\begin{cases}
\left(6\,|Q^+|+12\,|Q^-|-5\right)\beta_0^{-\frac{1}{p-1}}\|\omega\|^2 &\text{if \ }Q^+\neq\emptyset,\\
12\,|Q^-|\beta_0^{-\frac{1}{p-1}}\|\omega\|^2 &\text{if \ }Q^+=\emptyset,
\end{cases}
\end{align*}
where $|Q^\pm|$ denotes the cardinality of $Q^\pm$ and $\beta_0:=\min\{\beta_{11},\ldots,\beta_{\ell\ell}\}$.

\item[$(c)$] Similarly, for any $q\geq 2$, the estimate in \emph{Theorem \ref{thm:main2}$(c)$} yields
\begin{equation*}
\|\bf u\|^2\leq
\begin{cases}
\left(6\,|Q^+|+12\,|Q^-|-5\right)\,\mu_*\|\omega\|^2 &\text{if \ }Q^+\neq\emptyset,\\
12\,|Q^-|\,\mu_*\|\omega\|^2 &\text{if \ }Q^+=\emptyset.
\end{cases}
\end{equation*}
where $\mu_*=\max\{\mu_1,\ldots,\mu_q\}$. 

Assumptions $(B_2)$ and $(B_3)$ guarantee that $\bf u$ is fully nontrivial. Note that the left-hand side of the inequality in $(B_3)$ depends only on the entries of the submatrices $(\beta_{ij})_{i,j\in I_h}$, $h=1,\ldots,q$, whereas the right-hand side only depends on the other entries. So, if the former are large enough with respect to the absolute values of the latter, $(B_3)$ is satisfied. For example, if we take $\ell=2q$ and the matrix is
\begin{align*}
\begin{pmatrix}
\lambda & \lambda & \beta_{13} & \beta_{14} &\beta_{15} & \ldots & \beta_{1\ell}\\
\lambda & \lambda &  \beta_{23} & \beta_{24} &\beta_{25} & \ldots & \beta_{2\ell}\\
\beta_{31} & \beta_{32} & \lambda & \lambda & \beta_{35} & \ldots & \beta_{3\ell}\\
\beta_{41} & \beta_{42} & \lambda & \lambda & \beta_{45} & \ldots & \beta_{4\ell}\\
\vdots & \vdots &  &  & \ddots &  & \vdots\\
\beta_{\ell-1\,1} &  &  & \ldots & \beta_{\ell-1\,\ell-2} & \lambda & \lambda\\
\beta_{\ell 1} &  & & \ldots & \beta_{\ell\, \ell-2} & \lambda & \lambda\\
\end{pmatrix}.
\end{align*}
with $\lambda>0$ and $\beta_{ji}=\beta_{ij}<0$, then $(B_1)$ and $(B_2)$ are satisfied. If, additionally,
\begin{align*}
\lambda> 4^{\frac{2p-1}{p-1}}(q-1)C_*\qquad\text{and}\qquad |\beta_{ij}|\leq 1,
\end{align*}
then, for any $h=1,\ldots,q$, 
\begin{align*}
\Big(\min_{\{i,j\}\in E_h}\beta_{ij}\Big)\left[\frac{\min\limits_{h=1,\ldots,q} \ \max\limits_{i\in I_h}\beta_{ii}}{\dsum_{i,j\in I_h}\beta_{ij}}\right]^\frac{p}{p-1}=\lambda\left[\frac{\lambda}{4\lambda}\right]^\frac{p}{p-1}
 >\,C_*4(q-1)\geq\,C_*\dsum_{\substack{k=1 \\ k\neq h}}^q \ \dsum_{\substack{i\in I_h \\ j\in I_k}}|\beta_{ij}|   
 \end{align*}
so $(B_3)$ is satisfied.

\end{itemize}
\end{examples}

\appendix

\section{An auxiliary result}

\begin{lemma} \label{independent}
For every $r\geq 1$ there is a linear operator $E_r: H^{1}(\mathbb{R}^{N}\smallsetminus B_{r})\rightarrow H^{1}(\mathbb{R}^{N})$ such that, for every $u\in H^{1}(\mathbb{R}^{N}\smallsetminus B_{r})$,
\begin{itemize}
\item[$(i)$] $E_ru=u$ a.e. in $\mathbb{R}^{N}\smallsetminus B_{r}$,
\item[$(ii)$] $|E_ru|_2^2\leq C_1|u|_{L^2(\rn\smallsetminus B_r)}^2$
\item[$(iii)$] $\|E_ru\|^2\leq C_1\|u\|_{H^1(\rn\smallsetminus B_r)}^2$
\end{itemize}
for some positive constant $C_1$ depending only on $N$ and not on $r$. As a consequence, given $p\in(1,\frac{2^*}{2})$ there is a positive constant $C$ depending only on $N$ and $p$ such that
$$|u|_{L^{2p}(\rn\smallsetminus B_r)}\leq C\|u\|_{H^1(\rn\smallsetminus B_r)}\quad\text{for every \ }u\in H^{1}(\mathbb{R}^{N}\smallsetminus B_{r})\text{\ and every \ }r\geq 1.$$
\end{lemma}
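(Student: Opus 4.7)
The natural attempt would be to rescale to the unit ball via $v(y):=u(ry)$ and invoke a standard extension for $H^1(\R^N\setminus B_1)$, but the gradient and $L^2$ parts of the $H^1$ norm scale with different powers of $r$, so the rescaling cannot be used directly. Instead, the plan is to build $E_r$ \emph{geometrically} by radial reflection across the sphere $\partial B_r$, and to check that all bounds come out independent of $r\geq 1$.

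Concretely, I would set $\varphi(x):=\bigl(\tfrac{2r}{|x|}-1\bigr)x=(2r-|x|)\tfrac{x}{|x|}$, which sends the annulus $A^-:=B_r\setminus B_{r-1/2}$ diffeomorphically onto $A^+:=B_{r+1/2}\setminus B_r$, fixing $\partial B_r$ pointwise. A direct computation of $\partial\varphi_i/\partial x_j$ shows that $\varphi$ is bi-Lipschitz on $A^-$ with constants depending only on $N$ (uniformly in $r\geq 1$, because $|x|$ stays comparable to $r$ and the factors $2r/|x|$ lie in a fixed bounded interval). The change of variables $t=2r-s$ in polar coordinates then gives $\int_{A^-}|u\circ\varphi|^2\,\d x\leq C\int_{A^+}|u|^2\,\d y$ with $C=C(N)$, and similarly for the gradient. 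Next I would fix once and for all a smooth cutoff $\psi\in\cC^\infty(\R)$ with $\psi(t)=1$ for $t\geq 0$ and $\psi(t)=0$ for $t\leq-1/2$, and define
\begin{equation*}
E_r u(x):=
\begin{cases}
u(x), & |x|\geq r,\\
\psi(|x|-r)\,u(\varphi(x)), & r-1/2<|x|<r,\\
0, & |x|\leq r-1/2.
\end{cases}
\end{equation*}
Since $\varphi=\mathrm{id}$ on $\partial B_r$ and $\psi(0)=1$, the traces from $|x|>r$ and $|x|<r$ agree, so $E_r u\in H^1(\R^N)$, and $(i)$ is immediate.

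For $(ii)$ and $(iii)$, the interior contribution is controlled on $A^-$ only: using $|\psi|\leq 1$, $|\psi'|\leq \|\psi'\|_\infty$, the bi-Lipschitz bounds for $\varphi$, and the change-of-variables estimate above, one gets
\begin{equation*}
\int_{B_r}|E_r u|^2+|\nabla E_r u|^2\,\d x\leq C(N)\int_{A^+}|u|^2+|\nabla u|^2\,\d y\leq C(N)\,\|u\|_{H^1(\R^N\setminus B_r)}^2.
\end{equation*}
Adding the trivial contribution from $|x|\geq r$ yields $(iii)$; the $L^2$-only bound $(ii)$ is obtained identically by dropping the gradient terms. The main obstacle is precisely to verify that the Lipschitz constants of $\varphi$ and $\varphi^{-1}$, as well as the Jacobian ratios $|\varphi^{-1}(y)|^{N-1}/|y|^{N-1}$ on $A^+$, are bounded above and below by constants depending only on $N$ for every $r\geq 1$; this is an elementary but slightly tedious computation, and it is the key point where the choice to reflect only to depth $1/2$ (independent of $r$) is needed.

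Finally, the consequence follows immediately: for $p\in(1,2^*/2)$ one has $2p\in(2,2^*)$, so the Sobolev embedding $H^1(\R^N)\hookrightarrow L^{2p}(\R^N)$ gives a constant $C_S=C_S(N,p)$ with
\begin{equation*}
|u|_{L^{2p}(\R^N\setminus B_r)}\leq |E_r u|_{L^{2p}(\R^N)}\leq C_S\,\|E_r u\|_{H^1(\R^N)}\leq C_S\sqrt{C_1}\,\|u\|_{H^1(\R^N\setminus B_r)},
\end{equation*}
with $C:=C_S\sqrt{C_1}$ depending only on $N$ and $p$, as required.
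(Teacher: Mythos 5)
Your construction is correct, but it takes a genuinely different route from the paper. The paper does exactly what you dismiss at the outset: it fixes one extension operator $E_1$ for the exterior of the unit ball (citing a standard reference) and rescales, setting $\what u(x):=u(rx)$ and $(E_ru)(y):=(E_1\what u)(y/r)$. The scaling mismatch you point to is real --- the rescaled norm is the weighted quantity $r^{-N}\int\bigl(r^2|\nabla u|^2+|u|^2\bigr)$ --- but the paper resolves it by writing $r^2\|E_ru\|^2=\int\bigl(r^2|\nabla(E_ru)|^2+|E_ru|^2\bigr)+(r^2-1)\int|E_ru|^2$ and controlling the second term with the separate $L^2$ bound $(ii)$; since $r\geq 1$, the weighted estimate plus $(ii)$ recombine into the unweighted bound $(iii)$ with a constant independent of $r$. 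This is precisely why $(ii)$ appears as a standalone item in the statement. Your approach instead builds $E_r$ explicitly by radial reflection $\varphi(x)=(2r-|x|)x/|x|$ to fixed depth $1/2$, with a cutoff, and verifies by hand that the bi-Lipschitz constants and Jacobian ratios are uniform for $r\geq 1$; the key points (traces matching on $\partial B_r$ because $\varphi$ fixes the sphere, the eigenvalues of $D\varphi$ lying in a fixed interval, the polar Jacobian ratio $(2r/s-1)^{N-1}\leq 3^{N-1}$ on $r-1/2<s<r$) all check out. What your route buys is self-containedness and a transparent reason for the $r$-uniformity; what the paper's route buys is brevity, at the cost of leaning on a cited extension theorem and the slightly delicate weighted-norm bookkeeping. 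Both yield the Sobolev consequence in the same way.
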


\begin{proof}
Fix a linear (extension) operator $E_1: H^{1}(\mathbb{R}^{N}\smallsetminus B_{1})\rightarrow H^{1}(\mathbb{R}^{N})$ and a positive constant $C_1$ satisfying $(i)$, $(ii)$ and $(iii)$ for $r=1$; see e.g. \cite[Theorem 2.3.2]{kesavan}. For $r>1$, set $\what u(x):=u(rx)$ and, for $u\in H^1(\rn\smallsetminus B_r)$, define 
$$(E_ru)(y):=(E_1\what u)\Big(\frac{y}{r}\Big).$$
Then, $\widehat{E_ru}=E_1\what u$. Clearly, $E_r$ satisfies $(i)$. Note that $|\what u|_{L^2(\rn\smallsetminus B_1)}^2=r^{-N}|u|_{L^2(\rn\smallsetminus B_r)}^2$ and that
$$\|\what u\|_{H^1(\rn\smallsetminus B_1)}^2=r^{-N}\left(\int_{\rn\smallsetminus B_r}\Big(r^2|\nabla u|^2+|u|^2\Big)\right).$$
Similar identities hold true when we replace $\rn\smallsetminus B_1$ and $\rn\smallsetminus B_r$ with $\rn$. Therefore,
\begin{align*}
r^{-N}|E_ru|_2^2=|\widehat{E_ru}|_2^2=|E_1\what u|_2^2 \leq C_1\|\what u\|_{L^2(\rn\smallsetminus B_1)}^2=r^{-N}C_1|u|_{L^2(\rn\smallsetminus B_r)}^2,
\end{align*}
which yields $(ii)$. Furthermore,
\begin{align*}
r^{-N}\left(\irn\Big(r^2|\nabla(E_ru)|^2+|E_ru|^2\Big)\right)&=\|\widehat{E_ru}\|^2=\|E_1\what u\|^2 \\
&\leq C_1\|\what u\|_{H^1(\rn\smallsetminus B_1)}^2=r^{-N}C_1\left(\int_{\rn\smallsetminus B_r}\Big(r^2|\nabla u|^2+|u|^2\Big)\right).
\end{align*}
This inequality, combined with $(ii)$, yields
\begin{align*}
r^2\|E_ru\|^2&=\irn\Big(r^2|\nabla(E_ru)|^2+|E_ru|^2\Big)+(r^2-1)\irn|E_ru|^2 \\
&\leq C_1\int_{\rn\smallsetminus B_r}\Big(r^2|\nabla u|^2+|u|^2\Big)+C_1(r^2-1)\int_{\rn\smallsetminus B_r}|u|^2=r^2 C_1\|u\|_{H^1(\rn\smallsetminus B_r)}^2,
\end{align*}
which gives $(iii)$.

For $p\in(1,\frac{N}{N-2})$ let $C_2=C_2(N,p)$ be the constant for the Sobolev embedding $H^1(\rn)\subset L^{2p}(\rn)$. Then, for any $u\in H^1(\rn\smallsetminus B_r)$, using statements $(i)$ and $(iii)$ we obtain
$$|u|_{L^{2p}(\rn\smallsetminus B_r)}^2\leq |E_ru|_{2p}^2\leq C_2\|E_ru\|^2 \leq C_2C_1\|u\|_{H^1(\rn\smallsetminus B_r)}^2,$$
as claimed.
\end{proof}

\bigskip

\end{document}